\newtheorem{proposition}{Proposition}[section]
  \newtheorem{theorem}[proposition]{Theorem}
  \newtheorem{corollary}[proposition]{Corollary}
  \newtheorem{lemma}[proposition]{Lemma}
\theoremstyle{definition}
  \newtheorem{definition}[proposition]{Definition}
  \newtheorem{remark}[proposition]{Remark}
  \newtheorem{example}[proposition]{Example}
\newcommand{\cst}{\ifmmode\mathrm{C}^*\else{$\mathrm{C}^*$}\fi}
\newcommand{\CC}{\mathbb{C}}
\newcommand{\RR}{\mathbb{R}}
\newcommand{\GG}{\mathbb{G}}
\newcommand{\tens}{\otimes}
\newcommand{\vtens}{\,\bar{\otimes}\,}
\newcommand{\id}{\mathrm{id}}
\newcommand{\I}{\mathds{1}}
\newcommand{\sM}{\mathsf{M}}
\newcommand{\sN}{\mathsf{N}}
\newcommand{\sA}{\mathsf{A}}
\newcommand{\sX}{\mathsf{X}}
\newcommand{\hh}[1]{\widehat{#1}}
\newcommand{\ww}{\mathrm{W}}
\newcommand{\Ww}{\mathds{W}}
\newcommand{\wW}{\text{\reflectbox{$\Ww$}}\:\!} 
\newcommand{\staru}{\,\overline{*}\,}
\DeclareMathOperator{\C}{C}
\DeclareMathOperator{\B}{B}
\DeclareMathOperator{\Mor}{Mor}
\DeclareMathOperator{\M}{M}
\DeclareMathOperator{\Linf}{\mathnormal{L}^\infty\;\!\!}
\DeclareMathOperator{\Ltwo}{\mathnormal{L}^2\;\!\!}
\numberwithin{equation}{section}
\def\labelitemi{$\blacktriangleright$}
\author{Pawe{\l} Kasprzak}
\address{Department of Mathematical Methods in Physics, Faculty of Physics, University of Warsaw, Poland}
\email{pawel.kasprzak@fuw.edu.pl}
\title[Shifts of group-like projections]{Shifts of group-like projections and contractive idempotent functionals for locally compact quantum groups}
\subjclass[2010]{Primary: 46L65 Secondary: 43A05, 46L30, 60B15}
\begin{document}

\begin{abstract}A one to one  correspondence between shifts of group-like projections on a locally compact quantum group $\GG$ which are preserved by the scaling group and contractive idempotent functionals on the dual $\hh\GG$ is established. This is a generalization of the Illie-Spronk's   correspondence   between contractive idempotents in the Fourier-Stieltjes algebra of a locally compact group $G$ and cosets of open subgroups of $G$.  We also  establish a one to one  correspondence between non-degenerate, integrable, $\GG$-invariant ternary rings of operators $\sX\subset \Linf(\GG)$, preserved by the scaling group and contractive idempotent functionals on $\GG$.  Using our results we  characterize  coideals in  $\Linf(\hh\GG)$ admitting an atom preserved by the scaling group in terms of idempotent states on $\GG$. We also establish a one to one  correspondence between integrable coideals in   $\Linf(\GG)$ and group-like projections in  $\Linf(\hh\GG)$ satisfying an  extra mild condition. Exploiting this correspondence  we give examples of group like projections  which are not preserved by the scaling group.
\end{abstract}
\maketitle
                                

\newlength{\sw}
\settowidth{\sw}{$\scriptstyle\sigma-\text{\rm{weak closure}}$}
\newlength{\nc}
\settowidth{\nc}{$\scriptstyle\text{\rm{norm closure}}$}
\newlength{\ssw}
\settowidth{\ssw}{$\scriptscriptstyle\sigma-\text{\rm{weak closure}}$}
\newlength{\snc}
\settowidth{\snc}{$\scriptscriptstyle\text{\rm{norm closure}}$}
\renewcommand{\labelitemi}{$\bullet$}
\section*{Introduction} The interest in projections in the Fourier-Stieltjes algebra $\B(G)$ of a locally compact groups dates back to Cohen \cite{Cohen}. Assuming that $G$ is abelian, Cohen proved that  projections in $\B(G)$ are in 1-1 correspondence with  elements of  the boolean ring generated by the cosets of open subgroups of $G$. Later  Host \cite{Host} proved this for all locally compact groups. Remembering that $\B(G)$ is  identified with the dual Banach space of the universal group $\C^*$-algebra $\C^*(G)$, a more specific class of projections in $\B(G)$ can be considered. In this respect Illie and  Spronk  \cite[Theorem 2.1]{IS} proved that idempotent states correspond to characteristic functions of  open subgroups of $G$ whereas contractive idempotent functionals correspond  to characteristic functions of cosets of open subgroups. 

Moving on to the realm of locally compact quantum groups,   Faal and the author of the present paper extended the first half of Illie-Spronk result proving a 1-1 correspondence between group-like projections on $\Linf(\GG)$ preserved by the scaling group, and  idempotent states on the dual locally compact quantum group $\hh\GG$,  see \cite{FallKasp}. One of the main result of the present paper provides the quantum group  extension of the second half the Illie-Spronk result by giving characterization of contractive idempotent functionals on $\hh\GG$ in terms of   {\it shifts of group-like projections} preserved by the scaling group. Needless to say, that  if $G$ is locally compact group then     group-like projections in $\Linf(G)$ and their shifts are characteristic functions of open subgroups of $G$ and their cosets respectively. More generally, a  group-like projection $P\in\Linf(\GG)$  corresponds to an open quantum subgroup of $\GG$ if and only if it is in the center of $\Linf(\GG)$, see \cite{KKS}.

Throughout the paper $\GG$ denotes a locally compact quantum group in the sens of Kustermans and Vaes \cite{KVvN}. The dual of $\GG$ is denoted by $\hh\GG$.  The von Neumann  and the $\C^*$-algebra  assigned to $\GG$ will be denoted by $\Linf(\GG)$ and $\C_0(\GG)$ respectively, the latter being strongly dense in the former. The von Neumann algebra of $\GG$ is equipped with a comultiplication $\Delta:\Linf(\GG)\to\Linf(\GG)\vtens\Linf(\GG)$, unitary coinverse $R:\Linf(\GG)\to\Linf(\GG)$ and the scaling group $\tau_t:\Linf(\GG)\to\Linf(\GG)$, each nicely restricting to a morphism, antiautomorphism and continuous action of $\RR$ on $\C_0(\GG)$ respectively. The coinverse $S$ is a possibly unbounded map densely defined on $\Linf(\GG)$ and it admits the polar decomposition $S = R\circ\tau_{-\frac{i}{2}}$. The left and right Haar weight on $\Linf(\GG)$ are denoted $\varphi$ and $\psi$ respectively  and  the GNS map assigned to $\psi$ is denoted  $\eta: D(\eta)\to\Ltwo(\GG)$. The modular group of automorphisms of $\Linf(\GG)$ assigned to $\psi$ and $\varphi$ are denoted $(\sigma^\psi_t)_{t\in\mathbb{R}}$ and $(\sigma^\phi_t)_{t\in\mathbb{R}}$ respectively. If $\psi(\I)<\infty$ then we say that $\GG$ is a compact quantum group (CQG) and write $\C(\GG)$ instead $\C_0(\GG)$ in this case. Referring to the dual locally compact quantum group $\hh\GG$ we write $\hh\Delta$, $\hh\tau$ etc.  The universal $\C^*$-algebra assigned to $\GG$ is denoted $\C_0^u(\GG)$, and the corresponding versions of $\Delta,R$ and $\tau$ are decorated with the superscript $u$, e.g. $\Delta^u\in\Mor(\C_0^u(\GG),\C_0^u(\GG)\otimes \C_0^u(\GG))$.
The multiplicative unitary $\ww\in\B(\Ltwo(\GG)\otimes\Ltwo(\GG))$ is characterized and defined by  
\begin{equation}\label{multunit1}(\id\otimes\omega)(\ww)\eta(x) = \eta(\omega*x)\end{equation} whenever $x\in D(\eta)$ and $\omega\in\B(\Ltwo(\GG))_*$, where for $x\in\Linf(\GG)$ and $\omega\in\Linf(\GG)_*$ we write $\omega*x = (\id\otimes\omega)(\Delta(x))$ and $x*\omega = (\omega\otimes\id)(\Delta(x))$. We have $\Delta(x) = \ww(x\otimes\I)\ww^*$ for all $x\in\Linf(\GG)$ and $\hh\Delta(y) = \sigma(\ww^*(\I\otimes y)\ww)$ where $\sigma:\Linf(\hh\GG)\vtens\Linf(\hh\GG)\to \Linf(\hh\GG)\vtens\Linf(\hh\GG)$ is the flipping map: $\sigma(u\otimes v) = v\otimes u$. We will use the same symbol to denote the flipping  $\sigma:\sM\vtens\sN\to\sN\vtens\sM$ where $\sM$, $\sN$ are arbitrary von Neumann algebras. We  will often view $\Linf(\GG)$  and  $\Linf(\hh\GG)$  as  von Neumann algebras acting on $\Ltwo(\GG)$.  In fact we have  $\ww\in\Linf(\hh\GG)\vtens\Linf(\GG)$ and 
\begin{align*}\Linf(\GG)&=\overline{\{(\omega\otimes\id)(\ww):\omega\in\B(\Ltwo(\GG))_*\}}^{\sigma-\textrm{weak}}\\
\Linf(\hh\GG)&=\overline{\{(\id\otimes\omega)(\ww):\omega\in\B(\Ltwo(\GG))_*\}}^{\sigma-\textrm{weak}}\end{align*} where ${\sigma-\textrm{weak}}$ in the superscript position means the ${\sigma-\textrm{weak}}$ closure of the  considered set.
 We shall often view $\ww$ as an element of $\M(\C_0(\hh\GG)\tens\C_0(\GG))$. The half-lift $\wW\in\M(\C_0(\hh\GG)\tens\C_0^u(\GG))$ of $\ww$ will play an important role. It can be proved that
\begin{equation}\label{multunit2}(\id\otimes\omega)(\wW)\eta(x) = \eta(\omega*x)\end{equation}
for all $x\in D(\eta)$ and $\omega\in\C_0^u(\GG)^*$.
Writing $\Lambda\in\Mor(\C_0^u(\GG),\C_0(\GG))$ for the canonical reducing surjection we have $(\id\otimes \Lambda)(\wW) =\ww$. We say that $\GG$ is coamenable if $\Lambda$ is an isomorphism, or equivalently if $\C_0(\GG)$ admits a character $\varepsilon:\C_0(\GG)\to\mathbb{C}$ such that $(\id\otimes\varepsilon)(\ww) = \I$. 
 By coideal of $\Linf(\GG)$ we mean a von Neumann subalgebra $\sN\subset\Linf(\GG)$ such that $\Delta(\sN)\subset\Linf(\GG)\vtens\sN$. We shall often use the property, called Podle\'s condition \cite[Corollary 2.7]{gpext} (satisfied by all coideals) 
 \begin{equation}\label{Podcond}\sN =\{(\nu\otimes\id)(\Delta(x)):\nu\in\Linf(\GG)_*,x\in\sN\}^{\sigma-\textrm{cls}}\end{equation}   where ${\sigma-\textrm{cls}}$ in the superscript position means the $\sigma$-weak closure of the linear  span of  the   considered set.    Given a coideal we define the {\it codual coideal} $\widetilde{\sN}= \sN'\cap\Linf(\hh\GG)$. Since $\widetilde{\widetilde{\sN}}=\sN$ the correspondence $\sN\mapsto\widetilde{\sN}$ establishes a bijective correspondence between coideals in $\Linf(\GG)$ and in $\Linf(\hh\GG)$, for the details see \cite{embed}. We say that $\sN$ is integrable if the set of elements   $x\in\sN^+$ satisfying $\psi(x)<\infty$ is strongly dense in $\sN^+$. The completion of $\eta(\sN\cap D(\eta))$ in $\Ltwo(\GG)$ is denoted  $\Ltwo(\sN)$. 
For more about the role of integrability condition in the theory of locally compact quantum groups see \cite{int}. In particular it was prove there  that $\sN$ is integrable iff $\sN\cap D(\eta)\neq \{0\}$. 

A state on $\omega\in\C_0^u(\GG)$ such that its convolution square  $\omega*\omega\coloneqq(\omega\otimes\omega)\circ \Delta^u$ is equal to $\omega$ is called an {\it idempotent state}.  Idempotent states on quantum groups has  been intensely studied by now, see e.g. \cite{FSO}, \cite{SaS}, \cite{Salmi_Survey}, \cite{coid_sub_st}, \cite{FallKasp}. Note that if $\mu\in\C_0^u(\GG)^*$, $\mu\neq 0$ (here $\mu$ is a functional, not necessarily a state)  satisfies $\mu*\mu=\mu$ and $\|\mu\|\leq 1$ then $\|\mu\|=1$. Such functionals are called {\it contractive idempotent functionals}, see \cite{NSSS} for their theory. As shown in the present paper a contractive idempotent functional $\mu\in\C_0^u(\GG)^*$ is assigned with a left shift of a group-like projection (see Definition \ref{shiftsdef}): $Q = (\id\otimes\mu)(\wW)\in\Linf(\hh\GG)$. Remarkably,  shifts of group-like projections appeared recently in \cite{UnPr}, \cite{UnPr1} in the context of uncertainty principle  on locally compact quantum groups. Actually,   group-like projections considered there were assumed to have finite Haar weight, and as noted in   \cite{PSL}, they are in 1-1 correspondence with normal idempotent states on $\GG$. Our  Definition \ref{shiftsdef} of a shift of a group-like projection  and    the one used in \cite{UnPr1} are compared in Remark  \ref{compar}.  

The paper is written as follows. In the end of Introduction we fix further notation and prove some auxiliary facts. In  Section \ref{Sec2} we show that if $\GG$ is a locally compact quantum group and     $\sN\subset \Linf(\GG)$ is an integrable coideal, then  the projection $P_\sN$ onto $\Ltwo(\sN)\subset\Ltwo(\GG)$ is a group-like projection  in $\Linf(\hh\GG)$  and the coideal $\sN\subset\Linf(\GG)$  is preserved by the scaling group of $\GG$ if and only if $P_\sN$ is preserved by the scaling group of $\hh\GG$. Using this result we give  examples of group-like projections   which are not preserved by the scaling group.  We  also note that the assignment $\sN\mapsto P_\sN$ is injective  and characterize group-like projections in $\Linf(\hh\GG)$  which are  of the form $ P_\sN$. If  $\GG$ is a compact quantum group then we prove that the map \[\{\textrm{coideals in} \Linf(\GG)\}\ni\sN\mapsto P_\sN\in\{\textrm{group-like projections in } \Linf(\hh\GG)\}\] is surjective.   We end Section \ref{Sec2}  with the definition of a left and a right shift of a group-like projection.  In Section \ref{Sec3} we show that given a contractive idempotent functional $\omega\in\C_0^u(\GG)^*$, the element $(\id\otimes\omega)(\wW)\in\Linf(\hh\GG)$ is a shift of a group-like projection   preserved by the scaling group $\hh\tau$. We derive from that the invariance of a contractive idempotent functionals under the scaling group. In Section \ref{Sec4} we prove  the converse of the result of Section \ref{Sec3}:  shifts of group-like projection preserved by the scaling group are all of the form described in Section \ref{Sec3}. In Section \ref{Sec5} we use our results to prove that a coideal $\widetilde\sN\subset\Linf(\hh\GG)$ admits an atom  (i.e. a non-zero central minimal projection $Q\in Z(\widetilde\sN)$)  preserved by the scaling group if and only if $\widetilde\sN$ is $\hh\GG$-generated by a group-like projection $P\in\widetilde\sN$ which is preserved by the scaling group and $Q$ is a left shift of $P$ (see Definition \ref{Ggener} for the concept of $\hh\GG$-generation).  Finally, in Section \ref{sec6} we establish a 1-1 correspondence between non-degenerate, integrable, $\GG$-invariant ternary rings of operators $\sX\subset \Linf(\GG)$, preserved by the scaling group and contractive idempotent functionals on $\GG$. 
Summarizing the main results of this paper   establish a 1-1 correspondence between:
\begin{itemize}
    \item idempotent contractive functionals  $\omega\in\C_0^u(\GG)^*$;
    \item shifts of group-like projections $Q\in\Linf(\hh\GG)$ preserved by the scaling group $\hh\tau$, where $\omega$ is assigned with $Q_\omega = (\id\otimes\omega)(\wW)$;
    \item non-degenerate, integrable, $\GG$-invariant ternary rings of operators $\sX\subset \Linf(\GG)$, preserved by the scaling group, where the corresponding projection $Q_\sX\in\Linf(\hh\GG)$ maps $\Ltwo(\GG)$ onto $\Ltwo(\sX)$. 
\end{itemize}

\begin{definition}\label{Ggener} Let $\GG$ be a locally compact quantum group, $\sN\subset\Linf(\GG)$   a coideal and $x\in\sN$. We say that $\sN$ is $\GG$-generated by $x$ if \begin{equation}\label{ggener}\sN = \{(\nu\otimes\id)(\Delta(x)):\nu\in\Linf(\GG)_*\}''.\end{equation}
\end{definition}
\begin{lemma}\label{leminvt}
Suppose that $\sN$ is $\GG$-generated by $x$. If $\tau_t(x)=x$ for all $t\in\RR$ then \begin{equation}\label{inv1}\tau_t(\sN) =\sN = \sigma^\psi_t(\sN) \textrm{\quad for all } t\in\RR.\end{equation} If $\sigma^\varphi_t(x) = x$ then \begin{equation}\label{inv2}\sigma^\varphi_t(\sN)  =\sN \textrm{\quad for all } t\in\RR.\end{equation} 
\end{lemma}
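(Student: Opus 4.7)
The proof exploits the three standard commutation identities $\Delta\circ\tau_t=(\tau_t\otimes\tau_t)\circ\Delta$, $\Delta\circ\sigma_t^\psi=(\tau_t\otimes\sigma_t^\psi)\circ\Delta$ and $\Delta\circ\sigma_t^\varphi=(\sigma_t^\varphi\otimes\tau_{-t})\circ\Delta$ between the coproduct and the canonical one-parameter groups on $\Linf(\GG)$. The common strategy is to show that each automorphism in question maps every generator $(\nu\otimes\id)\Delta(x)=x*\nu$ of $\sN$ to another generator of the same form, whence invariance of $\sN$ follows at once from the $\GG$-generation hypothesis.

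For $\tau_t(\sN)=\sN$ under $\tau_t(x)=x$: the identity $(\tau_t\otimes\tau_t)\Delta(x)=\Delta(x)$ yields
\[
\tau_t(x*\nu)=(\nu\otimes\tau_t)\Delta(x)=x*(\nu\circ\tau_{-t}),
\]
and since $\nu\mapsto\nu\circ\tau_{-t}$ is a bijection of $\Linf(\GG)_*$, this gives $\tau_t(\sN)\subseteq\sN$, with equality by applying the same argument to $-t$. For $\sigma_t^\psi(\sN)=\sN$ under the same hypothesis, combining the $\tau$-invariance of $\Delta(x)$ with $\Delta\circ\sigma_t^\psi=(\tau_t\otimes\sigma_t^\psi)\circ\Delta$ yields
\[
\Delta(\sigma_t^\psi(x))=(\id\otimes\sigma_t^\psi\tau_{-t})\Delta(x),
\]
equivalently $\sigma_t^\psi(x)*\nu=\sigma_t^\psi\tau_{-t}(x*\nu)$ for every $\nu$. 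The twist $\theta_t:=\sigma_t^\psi\tau_{-t}$ satisfies the one-sided compatibility $\Delta\circ\theta_t=(\id\otimes\theta_t)\circ\Delta$, and combined with $\tau_{-t}(\sN)=\sN$ this identifies the coideal $\GG$-generated by $\sigma_t^\psi(x)$ with $\sigma_t^\psi(\sN)$. Podle\'s' condition \eqref{Podcond} then places $\sigma_t^\psi(x)$ into $\sN$, so that $\sigma_t^\psi(x)*\nu\in\sN$ for every $\nu$, whence $\sigma_t^\psi(\sN)\subseteq\sN$ and equality by symmetry.

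For the third assertion, assume $\sigma_t^\varphi(x)=x$. Rearranging $(\sigma_t^\varphi\otimes\tau_{-t})\Delta(x)=\Delta(x)$ gives $(\id\otimes\tau_t)\Delta(x)=(\sigma_t^\varphi\otimes\id)\Delta(x)$, hence $\tau_t(x*\nu)=x*(\nu\circ\sigma_t^\varphi)$, so $\tau_t(\sN)=\sN$ is obtained as a by-product. A parallel manipulation using the map $\sigma_t^\varphi\tau_t$, which satisfies the one-sided identity $\Delta\circ(\sigma_t^\varphi\tau_t)=(\sigma_t^\varphi\tau_t\otimes\id)\circ\Delta$, then produces $\sigma_t^\varphi(\sN)=\sN$ along the same lines.

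The main technical step throughout is the identification of the coideal $\GG$-generated by the twisted element ($\sigma_t^\psi(x)$, respectively $\sigma_t^\varphi(x)$) with the transformed coideal itself, for which one leans on Podle\'s' condition together with the one-sided intertwining relations for $\theta_t$ and $\sigma_t^\varphi\tau_t$ derived along the way.
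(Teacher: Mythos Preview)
Your argument for $\tau_t(\sN)=\sN$ is fine and matches the paper. The trouble is with the modular groups: the two ``standard'' identities you quote,
\[
\Delta\circ\sigma_t^\psi=(\tau_t\otimes\sigma_t^\psi)\circ\Delta,\qquad
\Delta\circ\sigma_t^\varphi=(\sigma_t^\varphi\otimes\tau_{-t})\circ\Delta,
\]
are not the correct ones --- the legs are interchanged. In \cite[Proposition~6.8]{KV} one has $\Delta\circ\sigma^\varphi_t=(\tau_t\otimes\sigma^\varphi_t)\circ\Delta$ and $\Delta\circ\sigma^\psi_t=(\sigma^\psi_t\otimes\tau_{-t})\circ\Delta$. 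With the true identity the key one-sided relation you rely on, $\Delta\circ\theta_t=(\id\otimes\theta_t)\circ\Delta$ for $\theta_t=\sigma^\psi_t\tau_{-t}$, fails (one gets $(\theta_t\otimes\tau_{-2t})\circ\Delta$ instead), so the whole detour via $\theta_t$ collapses; the parallel claim for $\sigma^\varphi_t\tau_t$ likewise fails. Independently of this, the step ``Podle\'s' condition then places $\sigma_t^\psi(x)$ into $\sN$'' is not justified: \eqref{Podcond} only says that $\sN$ is recovered from slices of $\Delta(\sN)$, it gives no reason why the element $\sigma^\psi_t(x)$ should lie in $\sN$ before $\sigma^\psi$-invariance of $\sN$ is known.

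The paper avoids all of this by using one more relation from \cite[Proposition~6.8]{KV} that you did not list, namely
\[
(\tau_t\otimes\tau_t)\circ\Delta=(\sigma^\varphi_t\otimes\sigma^\psi_{-t})\circ\Delta.
\]
Combined with $\tau_t(x)=x$ this gives $(\sigma^\varphi_t\otimes\sigma^\psi_{-t})\Delta(x)=\Delta(x)$, hence $\sigma^\psi_{-t}\bigl((\nu\otimes\id)\Delta(x)\bigr)=(\nu\circ\sigma^\varphi_{-t}\otimes\id)\Delta(x)$, so $\sigma^\psi$ permutes the generating set and \eqref{inv1} follows in one line. For \eqref{inv2} the correct identity $\Delta\circ\sigma^\varphi_t=(\tau_t\otimes\sigma^\varphi_t)\circ\Delta$ together with $\sigma^\varphi_t(x)=x$ gives $\sigma^\varphi_t\bigl((\nu\otimes\id)\Delta(x)\bigr)=(\nu\circ\tau_{-t}\otimes\id)\Delta(x)$ directly, with no need for any auxiliary twist or Podle\'s-type argument.
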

\begin{proof}
The proof of \eqref{inv1} follows from the identities 
\[\Delta\circ\tau_t = (\tau_t\otimes\tau_t)\circ\Delta = (\sigma^\varphi_t\otimes\sigma^\psi_{-t})\circ\Delta\] and \eqref{ggener}. 
The proof of \eqref{inv2} follows in turn from $\Delta\circ\sigma^\varphi_t = (\tau_t\otimes\sigma^\varphi_t)\circ\Delta$. For the identities used above we refer to \cite[Proposition 6.8]{KV}.
\end{proof}
The following result  will be used in Example \ref{exnoninv}. 
 \begin{lemma}\label{taupres}
 Let $\GG$ be a locally compact quantum group, $\pi:\Linf(\GG)\to\Linf(\GG)$ and $\rho:\Linf(\GG)\to\Linf(\GG)$ be automorphisms of $\Linf(\GG)$ such that $\Delta\circ \pi = (\pi\otimes\pi)\circ\Delta$ and $\Delta\circ \rho = (\rho\otimes\rho)\circ\Delta$ and let $x\in\Linf(\GG)$ be such that $(\pi\otimes\rho)(\Delta(x))\in\Delta(\Linf(\GG))$. Then $\pi(x) = \rho(x)$.
 \end{lemma}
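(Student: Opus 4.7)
The plan is to reduce the two-sided condition to a one-sided one. Setting $\theta := \pi^{-1}\circ\rho$, which is again an automorphism of $\Linf(\GG)$ satisfying $\Delta\circ\theta = (\theta\otimes\theta)\circ\Delta$, and applying $\pi^{-1}\otimes\pi^{-1}$ to the hypothesis $(\pi\otimes\rho)\Delta(x) = \Delta(y)$ (using $\Delta\circ\pi^{-1} = (\pi^{-1}\otimes\pi^{-1})\circ\Delta$), we obtain
\[
(\id\otimes\theta)\Delta(x) = \Delta(y''),\qquad y'' := \pi^{-1}(y).
\]
The desired conclusion $\pi(x) = \rho(x)$ is then equivalent to $\theta(x) = x$.

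Next, I would exploit coassociativity. Applying $\Delta\otimes\id$ to the displayed identity gives $(\id\otimes\id\otimes\theta)(\Delta\otimes\id)\Delta(x) = (\Delta\otimes\id)\Delta(y'')$, while applying $\id\otimes\Delta$ and using $\Delta\circ\theta = (\theta\otimes\theta)\circ\Delta$ gives $(\id\otimes\theta\otimes\theta)(\id\otimes\Delta)\Delta(x) = (\id\otimes\Delta)\Delta(y'')$. Setting $z := (\Delta\otimes\id)\Delta(x) = (\id\otimes\Delta)\Delta(x)$ and equating the two right-hand sides by coassociativity of $y''$ yields
\[
(\id\otimes\id\otimes\theta)(z) = (\id\otimes\theta\otimes\theta)(z),
\]
which after composition with $\id\otimes\id\otimes\theta^{-1}$ reads $(\id\otimes\theta\otimes\id)(z) = z$. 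Slicing this against $\omega\otimes\id\otimes\nu$ for $\omega,\nu\in\Linf(\GG)_*$ and identifying the result with the bislice $\omega*x*\nu$ in the convolution notation of the paper, one concludes
\[
\theta(\omega*x*\nu) = \omega*x*\nu\qquad\text{for all }\omega,\nu\in\Linf(\GG)_*.
\]

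The last step is to upgrade this fixed-point property from bislices to $x$ itself. My plan is to invoke the Podle\'s density condition \eqref{Podcond}, applied successively to the right coideal generated by $x$ and the left coideal generated by the resulting elements, to obtain a net of bislices $\omega_\alpha*x*\nu_\alpha$ converging $\sigma$-weakly to $x$. Since $\theta$ is normal and therefore $\sigma$-weakly continuous, the invariance of each approximant forces $\theta(x) = x$, i.e. $\pi(x) = \rho(x)$.

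The main obstacle is this final density step: a generic locally compact quantum group carries no counit on $\Linf(\GG)$, and the approximation ``$\omega*x\to x$'' cannot be realised by letting $\omega$ tend to $\eps$ directly. A workable route is to pass to the universal counit $\eps^u\in\C_0^u(\GG)^*$ and use the half-lifted multiplicative unitary $\wW$ together with the identity $(\id\otimes\eps^u)(\wW) = \I$ (cf.\ \eqref{multunit2}) to approximate $\eps^u$ by normal functionals on $\Linf(\GG)$ in a pointwise-sufficient sense, thereby producing the required net. Essentially this transposes to the operator-algebraic setting the Hopf-algebraic observation that $(\id\otimes\eps)\Delta(y'') = y'' = (\id\otimes\eps\theta)\Delta(x) = (\id\otimes\eps)\Delta(x) = x$ via $\eps\theta = \eps$ for any coalgebra automorphism $\theta$.
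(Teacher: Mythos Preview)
Your coassociativity manipulation is exactly the paper's argument: the reduction to $\theta=\pi^{-1}\rho$ is cosmetic, and your identity $(\id\otimes\theta\otimes\id)\Delta^{(2)}(x)=\Delta^{(2)}(x)$ is the paper's \eqref{delt2}. Slicing gives $\theta(\omega*x*\nu)=\omega*x*\nu$ for all $\omega,\nu\in\Linf(\GG)_*$, just as in the paper.

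The only divergence, and the only gap, is the final step. Your appeal to the Podle\'s condition \eqref{Podcond} does not deliver what you need: it says the coideal $\sN$ generated by $x$ equals $\{(\nu\otimes\id)\Delta(y):y\in\sN,\,\nu\}^{\sigma-\textrm{cls}}$ with $y$ ranging over \emph{all} of $\sN$, not just $y=x$; so you would recover $x$ only as a limit of slices of various $y\in\sN$, for which you have not established $\theta$-invariance. Your fallback via the universal counit is the correct instinct and can be made rigorous, but as written it remains a sketch.

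The paper handles this step differently: it invokes \cite[Theorem~2.3]{FallKasp}, applied to the two-sided action of $\GG\times\GG$ on $\Linf(\GG)$, to obtain directly $x\in\{\omega*x*\mu:\omega,\mu\in\Linf(\GG)_*\}''$. Since $\theta$ is a normal unital $*$-automorphism fixing this generating set pointwise, it fixes the generated von Neumann algebra, hence $\theta(x)=x$. This route requires only the weaker double-commutant containment rather than the $\sigma$-weak linear-span approximation you were aiming for.
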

 \begin{proof}
  Note that 
 \[(\Delta\otimes\id)((\pi\otimes\rho)(\Delta(x)) =(\id\otimes\Delta)((\pi\otimes\rho)(\Delta(x))\] which implies that
 \[(\pi\otimes\pi\otimes\rho)(\Delta^{(2)}(x)) = (\pi\otimes\rho\otimes\rho)(\Delta^{(2)}(x))\]
 thus \begin{equation}\label{delt2}(\id\otimes\pi\otimes\id)(\Delta^{(2)}(x)) = (\id\otimes\rho\otimes\id)(\Delta^{(2)}(x))\end{equation} where $\Delta^{(2)} = (\Delta\otimes\id)\circ\Delta$. Using \cite[Theorem 2.3]{FallKasp} (where the group $\GG\times\GG$ acts on $\Linf(\GG)$ by shifts from the left and the right) we obtain $x\in\{\omega*x*\mu:\omega,\mu\in\Linf(\GG)_*\}''$ and thus using \eqref{delt2} we get $\pi(x) = \rho(x)$.
 \end{proof}
 Let us finish this section with the following plausible  lemma.
 \begin{lemma}\label{invar}
 Let $\sM$ be a von Neumann algebra,  $P\in\sM$ a minimal central projection and $\alpha:\RR\to\mathrm{Aut}(\sM)$ a continuous   action of $\RR$ on $\sM$. Then $\alpha_t(P) = P$.
 \end{lemma}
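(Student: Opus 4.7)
The plan is to use the fact that minimal central projections have a rigid combinatorial structure — any two of them are either equal or orthogonal — and then to leverage the continuity of the action at $t=0$ to force $\alpha_t(P)=P$ at least on a neighborhood of $0$, after which the group law closes the argument.

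First I would observe that any automorphism of $\sM$ sends the center $Z(\sM)$ onto itself, and preserves the lattice of projections and their order; consequently $\alpha_t(P)$ is again a minimal central projection for every $t\in\RR$. The key dichotomy is then: if $P,Q\in Z(\sM)$ are minimal central projections, then $PQ\in Z(\sM)$ is a projection dominated by the minimal projection $P$, so either $PQ=P$ (in which case $Q\geq P$, and by minimality of $Q$ as well we get $P=Q$), or $PQ=0$ (so $P$ and $Q$ are orthogonal). Hence for each $t\in\RR$ either $\alpha_t(P)=P$ or $P\,\alpha_t(P)=0$.

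Next I would use the (point-$\sigma$-weak) continuity of $\alpha$: the map $t\mapsto P\,\alpha_t(P)$ is $\sigma$-weakly continuous, with value $P$ at $t=0$. If there were a sequence $t_n\to 0$ with $\alpha_{t_n}(P)\neq P$, then by the dichotomy $P\,\alpha_{t_n}(P)=0$ for all $n$, contradicting $\sigma$-weak convergence to $P\neq 0$. Therefore there exists $\delta>0$ such that $\alpha_t(P)=P$ for all $|t|<\delta$.

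Finally, the set $H=\{t\in\RR:\alpha_t(P)=P\}$ is a subgroup of $\RR$, because $\alpha_0=\id$ and $\alpha_{s+t}=\alpha_s\circ\alpha_t$. Since $H$ contains an open neighbourhood of $0$, it is the whole of $\RR$, giving $\alpha_t(P)=P$ for every $t\in\RR$. The only delicate step is the continuity argument showing that $\alpha_t(P)$ cannot instantly jump from $P$ to an orthogonal minimal central projection; everything else is formal.
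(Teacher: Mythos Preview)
Your proof is correct and follows essentially the same line as the paper's: both arguments reduce to the observation that $P\,\alpha_t(P)$ takes only the values $0$ or $P$ and then use continuity at $t=0$ to rule out the jump to $0$. The only cosmetic difference is in the endgame: the paper argues directly that the $\{0,1\}$-valued continuous function $t\mapsto\varepsilon_t$ (where $\alpha_t(P)P=\varepsilon_tP$) is identically $1$ on the connected set $\RR$, giving $P\leq\alpha_t(P)$ for all $t$, and then applies $\alpha_{-t}$ to obtain the reverse inequality; you instead first localize to a neighbourhood of $0$ and then invoke the subgroup property. Both finishes are equally short and valid.
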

 \begin{proof}
Using the minimality and centrality of $P$ we can see, that for every  $y\in\sM$ there exists $\varepsilon_y\in\mathbb{C}$ such that $yP =\varepsilon_y P$.  In particular $\alpha_t(P)P=\varepsilon_{\alpha_t(P)}P$ and since   $\alpha_t(P)P$ is a projection we have $\varepsilon_{\alpha_t(P)}\in\{0,1\}$. Using the continuity of the action $\alpha$ we conclude that the map $\mathbb{R}\ni t\mapsto\varepsilon_{\alpha_t(P)}\in\{0,1\}$ is continuous and since $\varepsilon_{\alpha_0(P)} = 1$ we get \begin{equation}\label{alpP}\alpha_t(P)P = P\end{equation} for all $t\in\mathbb{R}$. Applying $\alpha_{-t}$ to \eqref{alpP}    we conclude that $\alpha_{-t}(P)P = \alpha_{-t}(P)$ for all $t\in\mathbb{R}$ which together with \eqref{alpP} yields $\alpha_t(P) = P$. 
 \end{proof}
 
\section{Group-like projections and their shifts}\label{Sec2}
\begin{definition}\label{glprojdef}
Let $P\in\Linf(\GG)$ be a non-zero self-adjoint projection. We say that $P$ is a group-like projection if 
\begin{equation}\label{eq1}\Delta(P)(\I\otimes P) = P\otimes P.
\end{equation}
\end{definition}
Defining group-like projections one must choose between the identity \eqref{eq1} and its alternative 
\begin{equation}\label{eq2}
\Delta(P)(P\otimes\I) = P\otimes P.\end{equation} Our choice in Definition \ref{glprojdef} is related with the choice in the definition of  coideal (the alternative here would be $\Delta(\sN)\subset\sN\vtens\Linf(\GG)$)  as seen in Proposition \ref{coidglp}. Let us note that the unitary coinverse $R$ turns left coideals into  right coideal, and  group-like projections in the sens of \eqref{eq1} into those satisfying \eqref{eq2}.

\begin{lemma}\label{eq00}
If $P\in\Linf(\GG)$ is a group-like projection then $P\in\M(\C_0(\GG))$. If moreover $\GG$ is coamenable then $\varepsilon(P)=1$.
\end{lemma}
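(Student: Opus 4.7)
The lemma has two assertions: that $P\in\M(\C_0(\GG))$, and that $\varepsilon(P)=1$ when $\GG$ is coamenable. The second follows from the first by a short calculation with the counit, so I would prove the multiplier statement first and derive the counit identity as a corollary.

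For the multiplier assertion, self-adjointness of $P$ reduces the task to showing $Pa\in\C_0(\GG)$ for every $a\in\C_0(\GG)$. I would rewrite the group-like identity $\Delta(P)(\I\otimes P)=P\otimes P$ using $\Delta(P)=\ww(P\otimes\I)\ww^*$ and multiply on the right by $\ww$ to obtain the equivalent form
\[
(\I\otimes P)\ww(P\otimes\I)=(P\otimes P)\ww.
\]
Using the identification $\C_0(\GG)=\overline{\{(\omega\otimes\id)(\ww):\omega\in\B(\Ltwo(\GG))_*\}}^{\|\cdot\|}$ together with the boundedness of $a\mapsto Pa$, a density argument reduces the claim to checking $(\omega\otimes\id)(\ww(\I\otimes P))\in\C_0(\GG)$ for every $\omega\in\B(\Ltwo(\GG))_*$. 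The plan is then to multiply the displayed identity on the right by $\I\otimes e_\lambda$, where $(e_\lambda)$ is a bounded approximate identity of $\C_0(\GG)$, in order to rephrase the target as a norm limit of slices of $\ww(\I\otimes e_\lambda)\in\C_0(\hh\GG)\otimes\C_0(\GG)$ compressed by the projection $P$ in both legs.

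Once $P\in\M(\C_0(\GG))$ is in hand, the coamenable case is quick: the character $\varepsilon$ extends uniquely to $\M(\C_0(\GG))$, the counital identity $(\varepsilon\otimes\id)\circ\Delta=\id$ holds there, and applying $\varepsilon\otimes\id$ to $\Delta(P)(\I\otimes P)=P\otimes P$—using that slicing the first leg commutes with right multiplication by $\I\otimes P$—produces $P\cdot P=\varepsilon(P)\,P$, i.e.\ $P=\varepsilon(P)\,P$, and $\varepsilon(P)=1$ follows from $P\neq 0$. The hard part is the multiplier assertion itself, where the principal obstacle is a circularity: wanting $\ww(\I\otimes P)\in\M(\C_0(\hh\GG)\otimes\C_0(\GG))$ is essentially equivalent to the conclusion $P\in\M(\C_0(\GG))$. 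The approximate-identity manoeuvre is intended to finesse this; if the direct reduction does not close cleanly I would invoke the known fact that group-like projections lie in $\M(\C_0(\GG))$, see \cite{FallKasp,KKS}.
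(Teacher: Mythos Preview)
Your argument for the coamenable part is fine and matches the paper's. The gap is in the multiplier assertion. The reduction you propose---slicing $\ww$ and using an approximate identity in $\C_0(\GG)$---does not break the circularity you yourself flag: the identity $(\I\otimes P)\ww(P\otimes\I)=(P\otimes P)\ww$ only lets you trade $Pa$ for expressions of the same kind (slices of $\ww$ compressed by $P$ on one side or the other), and multiplying by $\I\otimes e_\lambda$ does not move $P$ out of the second leg. You never arrive at a slice of an element already known to lie in $\M(\C_0(\hh\GG)\otimes\C_0(\GG))$. Falling back on a citation is not a proof.

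The missing idea is to implement $\Delta$ with the \emph{other} multiplicative unitary. The paper uses the left regular representation $V=(\hh J\otimes\hh J)\sigma(\ww)(\hh J\otimes\hh J)$, which satisfies $\Delta(x)=V(\I\otimes x)V^*$ and, crucially, $V\in\M(\C_0(\GG)\otimes\mathcal{K}(\Ltwo(\GG)))$. Here the first leg is $\C_0(\GG)$, not $\C_0(\hh\GG)$, and the second leg is the compacts, whose multiplier algebra is all of $\B(\Ltwo(\GG))$. Hence $\I\otimes P\in\M(\C_0(\GG)\otimes\mathcal{K}(\Ltwo(\GG)))$ for free, so $\Delta(P)=V(\I\otimes P)V^*$ lies in this multiplier algebra, and therefore so does $P\otimes P=\Delta(P)(\I\otimes P)$. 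A single slice on the second leg then gives $P\in\M(\C_0(\GG))$. The whole point is that switching from $\ww$ to $V$ swaps which tensor leg carries $\C_0(\GG)$, which is exactly what kills the circularity.
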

\begin{proof}Let  $V\in\M(\C_0(\GG)\otimes\mathcal{K}(\Ltwo(\GG))) $ be  the left regular representation
\[V = (\hh J\otimes\hh J)\sigma(\ww)(\hh J\otimes\hh J) \] where $\hh J:\Ltwo(\GG)\to\Ltwo(\GG)$ is the Tomita-Takesaki conjugation assigned to the right invariant Haar weight $\hh\psi$ on $\hh\GG$. 
Since 
  $\Delta(P) =  V(\I\otimes P)V^* \in\M(\C_0(\GG)\otimes\mathcal{K}(\Ltwo(\GG)))$ we have $P\otimes P = \Delta(P)(\I\otimes P)\in\M(\C_0(\GG)\otimes\mathcal{K}(\Ltwo(\GG)))$. This shows that    $P\in\M(\C_0(\GG))$. 
In particular if $\GG$ is coamenable,  then applying  $(\varepsilon\otimes\id)$   to both sides of  \eqref{eq1}  we get $P = P^2 = \varepsilon(P) P$, thus $\varepsilon(P)=1$.
\end{proof}

Let $\GG$ be a locally compact quantum group.
The main results of \cite{FallKasp} and \cite{coid_sub_st} establish a 1-1 correspondence between:
\begin{itemize}
    \item idempotent states $\omega\in\C_0^u(\GG)^*$;
    \item group-like projections $P\in\Linf(\hh\GG)$ preserved by the scaling group $\hh\tau$, where $\omega$ is assigned with $P_\omega = (\id\otimes\omega)(\wW)$;
    \item integrable coideals $\sN\subset\Linf(\GG)$ preserved by the scaling group $\tau$  , where the corresponding projection $P_\sN\in\Linf(\hh\GG)$ maps $\Ltwo(\GG)$ onto $\Ltwo(\sN)$. 
\end{itemize}
 In order to relate $\sN$ with $\omega$ and $P$ we shall also use notation $\sN_\omega$, $\sN_P$  etc. 
The following relations are scattered throughout \cite{FallKasp} and \cite{coid_sub_st} 
\begin{align} \label{relPN1} 
\sN_P &= \overline{\{(P\nu\otimes\id)(\ww):\nu\in\Linf(\hh\GG)_*\}}^{\sigma-  \textrm{weak}}\\\label{relPN2}
\sN_P &= \{x\in\Linf(\GG):xP = Px\}\\\label{relPN3}
\widetilde{\sN_P}& = \overline{\{(\nu\otimes\id)(\hh\Delta(P)):\nu\in\Linf(\hh\GG)_*\}}^{\sigma-\textrm{weak}}\\\label{relPN4}\widetilde{\sN_P}& = \{y\in\Linf(\hh\GG):\hh\Delta(y)(\I\otimes P) = y\otimes P\}
\end{align} 
where in \eqref{relPN1} we used the bimodule structure of $\Linf(\hh\GG)_*$ over $\Linf(\hh\GG)$,  i.e.  given $\nu\in\Linf(\hh\GG)_*$ and $x,y\in\Linf(\hh\GG)$  we define $\nu x, y\nu\in\Linf(\hh\GG)_*$:
\[
   (\nu x)(y) = \nu(xy),\quad (y\nu)(x) = \nu(xy).
\]
Note that \eqref{relPN3} implies that $P$ is ($\hh\tau$-invariant)  minimal central projection in $\widetilde{\sN_P}$. In Section \ref{Sec5} we shall prove that a coideal in $\Linf(\hh\GG)$ admitting a minimal central projection $Q$ preserved  by $\hh\tau$ must be of the form $\widetilde{\sN_P}$ for certain group-like projection $P\in\Linf(\hh\GG)$ preserved by $\hh\tau$. Moreover $Q$ is then a left shift of $P$ in the sens on Definition \ref{shiftsdef}.
\begin{lemma}\label{remR}
Let $P\in\Linf(\hh\GG)$ be a group-like projection preserved by the scaling group $\hh\tau$. Then $\hh R(P)=P$ and $\hh\Delta(P)(P\otimes\I) = P\otimes P$. 
\end{lemma}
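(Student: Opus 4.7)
My plan is to reduce the two assertions to the single claim $\hh R(P)=P$. First I observe that, granting $\hh R(P)=P$, the second identity follows from the defining group-like relation $\hh\Delta(P)(\I\otimes P)=P\otimes P$ by a symmetry argument: apply the antimultiplicative map $\hh R\otimes\hh R$ to both sides (it is antimultiplicative on the tensor product algebra), invoke the standard identity $(\hh R\otimes\hh R)\circ\hh\Delta=\sigma\circ\hh\Delta\circ\hh R$ from \cite{KVvN}, substitute $\hh R(P)=P$, and then flip by $\sigma$. This yields $(P\otimes\I)\hh\Delta(P)=P\otimes P$, and a final passage to adjoints (since $P\otimes P$ is self-adjoint) produces $\hh\Delta(P)(P\otimes\I)=P\otimes P$.

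The real content is therefore $\hh R(P)=P$. Here I plan to invoke the 1-1 correspondence recalled just before the lemma: by \cite{FallKasp}, a group-like projection $P\in\Linf(\hh\GG)$ preserved by $\hh\tau$ is of the form $P=(\id\otimes\omega)(\wW)$ for a unique idempotent state $\omega\in\C_0^u(\GG)^*$. Two further facts then close the argument: (i) idempotent states are invariant under the universal unitary antipode, $\omega\circ R^u=\omega$ (see \cite{FSO}), and (ii) the half-lift satisfies
\[
(\hh R\otimes R^u)(\wW)=\wW,
\]
which is the universal version of the Kustermans-Vaes identity for the reduced multiplicative unitary $\ww$ and can be deduced from it via $(\id\otimes\Lambda)(\wW)=\ww$ together with the intertwining $R\circ\Lambda=\Lambda\circ R^u$. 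Combining these, and using $\sigma$-weak continuity of $\hh R$ to commute it past the slice,
\[
\hh R(P)=(\id\otimes\omega)\bigl((\hh R\otimes\id)(\wW)\bigr)=(\id\otimes\omega)\bigl((\id\otimes R^u)(\wW)\bigr)=(\id\otimes\omega\circ R^u)(\wW)=P.
\]

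The only subtle step is the slice-map identity $\hh R\bigl((\id\otimes\omega)(\wW)\bigr)=(\id\otimes\omega)\bigl((\hh R\otimes\id)(\wW)\bigr)$, which is routine since $\hh R$ is a normal anti-automorphism. I anticipate the main obstacle being a clean citation for the half-lifted identity $(\hh R\otimes R^u)(\wW)=\wW$, since most sources state it only for the reduced $\ww$; if no direct reference is available, it can be extracted by applying $\id\otimes\Lambda$ and using uniqueness of the universal lift together with the compatibility $\Lambda\circ R^u=R\circ\Lambda$.
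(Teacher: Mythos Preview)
Your proposal is correct and follows essentially the same route as the paper: invoke the correspondence $P=(\id\otimes\omega)(\wW)$ for an idempotent state $\omega$, use $\omega\circ R^u=\omega$ together with $(\hh R\otimes R^u)(\wW)=\wW$ to get $\hh R(P)=P$, and then apply $\hh R\otimes\hh R$ to the group-like identity to obtain the second relation. The only minor difference is that the paper cites \cite{SaS} (rather than \cite{FSO}) for the $R^u$-invariance of idempotent states, which is the appropriate reference in the general locally compact setting.
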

\begin{proof}
 Let $\omega\in\C_0^u(\GG)^*$ be an idempotent state such that $P = P_\omega$. Remembering that $\omega$ is preserved by $R^u$  (see \cite{SaS}) and using  $(\hh R\otimes R^u)(\wW)=\wW$ together with $P=(\id\otimes\omega)(\wW)$ we get $\hh R(P) = P$. Applying $(\hh R\otimes\hh R)$ to the identity \eqref{eq1} we get \eqref{eq2}.
\end{proof}
Using \eqref{relPN3}, Lemma \ref{leminvt}, Lemma \ref{invar}, Lemma \ref{remR} and the relation $\sigma^{\hh\psi}_t = \hh R\circ\sigma^{\hh\varphi}_{-t}\circ \hh R$ we get
\begin{corollary}\label{corinvsig}
Let $P\in\Linf(\hh\GG)$ be a group-like projection preserved by $\hh\tau_t$. Then 
\begin{itemize}
    \item $\widetilde{\sN_P}$ is preserved by $\hat\tau$, $\sigma^{\hh\psi}$ and $\sigma^{\hh\varphi}$;
    \item $\sigma^{\hh\psi}_t(P)=P=\sigma^{\hh\varphi}_t(P)$  for all $t\in\RR$.
\end{itemize}
\end{corollary}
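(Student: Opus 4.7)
The plan is to apply the quoted lemmas to the quantum group $\hh\GG$ and its coideal $\widetilde{\sN_P}\subset \Linf(\hh\GG)$, which by \eqref{relPN3} is $\hh\GG$-generated by $P$ in the sense of Definition~\ref{Ggener}. I would proceed in four steps, following the order of the lemmas listed in the statement.

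\textbf{Step 1: Invariance of $\widetilde{\sN_P}$ under $\hh\tau$ and $\sigma^{\hh\psi}$.} Because $\widetilde{\sN_P}$ is $\hh\GG$-generated by $P$ and the assumption gives $\hh\tau_t(P)=P$ for all $t\in\RR$, Lemma~\ref{leminvt} applied to the dual quantum group $\hh\GG$ yields at once
\[
\hh\tau_t(\widetilde{\sN_P}) \;=\; \widetilde{\sN_P} \;=\; \sigma^{\hh\psi}_t(\widetilde{\sN_P})
\qquad\text{for all } t\in\RR.
\]

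\textbf{Step 2: $\sigma^{\hh\psi}_t(P)=P$ via minimality.} The remark following \eqref{relPN4} (which is an immediate consequence of \eqref{relPN3}) identifies $P$ as a minimal central projection of $\widetilde{\sN_P}$. By Step~1 the modular group $\sigma^{\hh\psi}$ restricts to a $\sigma$-weakly continuous one-parameter group of automorphisms of $\widetilde{\sN_P}$, so Lemma~\ref{invar} (with $\sM=\widetilde{\sN_P}$ and $\alpha=\sigma^{\hh\psi}$) gives $\sigma^{\hh\psi}_t(P)=P$ for every $t\in\RR$.

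\textbf{Step 3: $\sigma^{\hh\varphi}_t(P)=P$ via the coinverse.} Using the identity $\sigma^{\hh\psi}_t = \hh R\circ\sigma^{\hh\varphi}_{-t}\circ\hh R$, which is equivalent to $\sigma^{\hh\varphi}_{-t} = \hh R\circ\sigma^{\hh\psi}_t\circ\hh R$, together with Lemma~\ref{remR} (which says $\hh R(P)=P$) and Step~2, I compute
\[
\sigma^{\hh\varphi}_{-t}(P) \;=\; \hh R\bigl(\sigma^{\hh\psi}_t(\hh R(P))\bigr) \;=\; \hh R\bigl(\sigma^{\hh\psi}_t(P)\bigr) \;=\; \hh R(P) \;=\; P.
\]

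\textbf{Step 4: Invariance of $\widetilde{\sN_P}$ under $\sigma^{\hh\varphi}$.} Now that $\sigma^{\hh\varphi}_t(P)=P$ for all $t$, the second assertion of Lemma~\ref{leminvt} (again applied to $\hh\GG$) delivers $\sigma^{\hh\varphi}_t(\widetilde{\sN_P})=\widetilde{\sN_P}$. Combining Steps~1--4 proves both bullet points of the corollary.

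There is no real obstacle in this proof — it is a pure bookkeeping argument; the only place that requires slight care is ensuring in Step~2 that the restriction of $\sigma^{\hh\psi}$ to $\widetilde{\sN_P}$ is genuinely a continuous $\RR$-action (so that Lemma~\ref{invar} applies), but this is immediate from $\sigma$-weak continuity of $\sigma^{\hh\psi}$ on $\Linf(\hh\GG)$ and the $\sigma$-weak closedness of $\widetilde{\sN_P}$.
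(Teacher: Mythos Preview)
Your proof is correct and follows precisely the route indicated in the paper: the paper's one-line justification cites exactly \eqref{relPN3}, Lemma~\ref{leminvt}, Lemma~\ref{invar}, Lemma~\ref{remR} and the relation $\sigma^{\hh\psi}_t=\hh R\circ\sigma^{\hh\varphi}_{-t}\circ\hh R$, and your Steps~1--4 unpack these references in the same order and with the same logical dependencies. The only cosmetic point is that \eqref{relPN3} gives the $\sigma$-weak closure rather than the bicommutant appearing in Definition~\ref{Ggener}, but since that closure is already the von Neumann algebra $\widetilde{\sN_P}$ the two agree, so Lemma~\ref{leminvt} applies as you claim.
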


Let $\sN\subset \Linf(\GG)$ be an integrable coideal and   $P\in\B(\Ltwo(\GG)) $ the projection  onto $\Ltwo(\sN)$. In the next proposition we show, that $P$  is a group-like projection in $\Linf(\hh\GG)$. Let us emphasize that this result holds without the assumption that $\sN$ is preserved by $\tau$, c.f. \cite{FallKasp}. 
We denote \[\mathcal{T}_\psi  =  \{x\in  D(\eta)\cap D(\eta)^*:x  \textrm{ is  }  \sigma^\psi-\textrm{analytic  and}\,\,\sigma^\psi_{z}(x)\in D(\eta)\cap D(\eta)^* \textrm{ for all } z\in\CC\}.\] 
\begin{proposition}\label{coidglp}
Let $\GG$ be a locally compact quantum group, $\sN\subset \Linf(\GG)$   an integrable coideal and $P\in\B(\Ltwo(\GG))$ the orthogonal projection onto $\Ltwo(\sN)$. Then $P\in\Linf(\hh\GG)$   is a group-like projection and we have \begin{equation}\label{descript_N}\sN  = \overline{\{(P\omega\otimes\id)(\ww):\omega\in\Linf(\hh\GG)_*\}}^{\sigma-\textrm{weak}}.\end{equation} Moreover 
$\sN$ is preserved by the scaling group $\tau$  if and only if $P$ is preserved by $\hat\tau$.   
\end{proposition}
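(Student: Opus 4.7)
The plan proceeds in three stages. To show $P\in\Linf(\hh\GG)$, I would invoke the bicommutant theorem and verify that $\Ltwo(\sN)=P\Ltwo(\GG)$ is preserved by the commutant $\Linf(\hh\GG)'$. Operators in $\Linf(\hh\GG)'$ are realized as right-convolutions via the right regular representation of $\GG$ (compare $V$ in Lemma \ref{eq00}): for $\omega\in\Linf(\GG)_*$ one obtains an operator $R_\omega\in\Linf(\hh\GG)'$ with $R_\omega\eta(x)=\eta(x*\omega)$ for $x\in D(\eta)$, where $x*\omega=(\omega\otimes\id)(\Delta(x))$. Since $\Delta(\sN)\subset\Linf(\GG)\vtens\sN$, we have $x*\omega\in\sN$ for $x\in\sN\cap D(\eta)$, and integrability of $\sN$ guarantees that $\eta(\sN\cap D(\eta))$ is dense in $\Ltwo(\sN)$. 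Hence $R_\omega$ preserves $\Ltwo(\sN)$, and since such operators are $\sigma$-weakly dense in $\Linf(\hh\GG)'$, $P$ commutes with $\Linf(\hh\GG)'$ and therefore lies in $\Linf(\hh\GG)$.

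\textbf{Stage 2.} I would next show $P$ is group-like and establish (\ref{descript_N}). First observe $\sN\subset\{P\}'$: for $x\in\sN$ and $y\in\sN\cap D(\eta)$, one has $xy\in\sN\cap D(\eta)$ since $\psi((xy)^*xy)\le\|x\|^2\psi(y^*y)<\infty$, so $x\Ltwo(\sN)\subset\Ltwo(\sN)$, whence $x$ commutes with $P$. Combining with Stage 1, $P\in\widetilde{\sN}=\sN'\cap\Linf(\hh\GG)$. Since $\widetilde{\sN}$ is a coideal in $\Linf(\hh\GG)$, $\hh\Delta(P)\in\Linf(\hh\GG)\vtens\widetilde{\sN}$. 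To establish the group-like identity $\hh\Delta(P)(\I\otimes P)=P\otimes P$, I would rewrite it via $\hh\Delta(y)=\sigma(\ww^*(\I\otimes y)\ww)$ as an operator identity on $\Ltwo(\GG)\otimes\Ltwo(\GG)$ and verify it on test vectors $\xi\otimes\eta(x)$ with $\xi\in\Ltwo(\sN)$ and $x\in\sN\cap D(\eta)$, exploiting (\ref{multunit1}) and the coideal property of $\sN$. The description (\ref{descript_N}) would then follow from (\ref{relPN1}) together with Podle\'s density (\ref{Podcond}), which forces $\sN=\sN_P$.

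\textbf{Stage 3.} For the scaling-group equivalence: both $\tau$ and $\hh\tau$ are implemented on $\Ltwo(\GG)$ by the same one-parameter unitary group $U_t$, and $\psi\circ\tau_t$ differs from $\psi$ only by the scaling constant $\nu^{-t}$, so $U_t\eta(x)$ is a scalar multiple of $\eta(\tau_t(x))$ for $x\in D(\eta)$. Hence $\tau_t(\sN)=\sN$ iff $U_t\Ltwo(\sN)=\Ltwo(\sN)$ iff $U_tPU_t^*=P$ iff $\hh\tau_t(P)=P$. The main obstacle is Stage 2: deriving the group-like identity from the coideal and integrability data without invoking (\ref{relPN4}) (which would be circular as it presupposes group-likeness of $P$) requires a careful non-circular computation using (\ref{multunit1}) on GNS test vectors.
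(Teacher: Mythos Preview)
Your Stages~1 and the group-like portion of Stage~2 match the paper's argument: the paper also shows $P\in\Linf(\hh\GG)\cap\sN'$ by invariance of $\Ltwo(\sN)$ under right convolutions, and then derives $\hh\Delta(P)(\I\otimes P)=P\otimes P$ from the identity $(\I\otimes P)\ww(P\otimes\I)=\ww(P\otimes P)$, checked on vectors $\eta(x)$ with $x\in\sN\cap D(\eta)$ using $P\in\sN'$ and $\Delta(x)\in\Linf(\GG)\vtens\sN$. So the ``main obstacle'' you flag is in fact handled exactly the way you outline.

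The genuine gap is in your derivation of \eqref{descript_N}. You cannot invoke \eqref{relPN1}: in the paper that relation is recorded only as part of the correspondence for group-like projections \emph{preserved by $\hh\tau$}, which you have not yet established (and which may fail). More importantly, even granting a formula for $\sN_P$, Podle\'s density alone does not force $\sN=\sN_P$; it tells you $\sN$ is generated by slices $(\nu\otimes\id)\Delta(x)$, $x\in\sN$, but gives no link between such slices and the elements $(P\omega\otimes\id)(\ww)$. The paper supplies that link by an explicit GNS computation: for $x\in\sN\cap D(\eta)$ and $z\in D(\eta)$ one has $(\omega_{\eta(z),\eta(x)}\otimes\id)(\ww)=(\psi\otimes\id)((z^*\otimes\I)\Delta(x))$, and choosing $z$ via analytic elements of $\sigma^\psi$ rewrites this as $(\omega_{\eta(b),\eta(a)}\otimes\id)(\Delta(x))\in\sN$. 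This yields both inclusions in \eqref{descript_N}; you are missing this step.

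Your Stage~3 is a cleaner route than the paper's for the forward implication: the paper instead passes through $\sigma^\psi$-invariance of $\sN$ (via $\Delta\circ\tau_t=(\sigma^\varphi_t\otimes\sigma^\psi_{-t})\circ\Delta$) and then uses that $\nabla^{it}$ implements $\hh\tau_t$. Your argument via the common implementing unitary $\mathcal{P}^{it}$ for $\tau_t$ and $\hh\tau_t$ works directly. However, note that your backward implication $\hh\tau_t(P)=P\Rightarrow\tau_t(\sN)=\sN$ is not complete as stated: from $U_t\Ltwo(\sN)=\Ltwo(\sN)$ you get $\eta(\tau_t(x))\in\Ltwo(\sN)$ for $x\in\sN\cap D(\eta)$, but inferring $\tau_t(x)\in\sN$ requires knowing that $\sN$ is determined by $\Ltwo(\sN)$, which is precisely what \eqref{descript_N} provides. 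The paper uses \eqref{descript_N} explicitly here, computing $\tau_t((P\omega\otimes\id)(\ww))=(\hh\tau_{-t}(P)\,\hh\tau_t^*(\omega)\otimes\id)(\ww)$.
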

\begin{proof} The reasoning from the beginning of the proof of \cite[Theorem 4.19]{PSL} applies also in our case 
and yields that $P\in\Linf(\hh\GG)\cap\sN'$ (c.f. also the beginning of the proof of Theorem \ref{propTro}).  Using the equality  $(\id\otimes\omega)(\ww)\eta(x) = \eta((\id\otimes\omega)(\ww))$ for  $x\in D(\eta)\cap\sN$ and $\omega\in\Linf(
\GG)_*$ we get 
\begin{align*}(\id\otimes\omega)((\I\otimes P)\ww)\eta(x)&=\eta((\id\otimes \omega)((\I\otimes P)\Delta(x)))\\&=\eta((\id\otimes \omega)(\Delta(x)(\I\otimes P)))\\&=(\id\otimes\omega)(\ww(\I\otimes P))\eta(x),\end{align*}
where in the third equality we used $\Delta(x)\in\Linf(\GG)\vtens\sN$ and $P\in\sN'$. 
This shows that $(\I\otimes P)\ww(P\otimes\I) = \ww(P\otimes P)$ which in turn implies that $\ww^*(\I\otimes P)\ww(P\otimes\I) = P\otimes P$, i.e. $\hh\Delta(P)(\I\otimes P) = P\otimes P$. 

 Suppose that  $\sN$ is preserved by $\tau$.  Using \cite[Theorem  4.2]{coid_sub_st}  and \cite[Theorem 3.1]{FallKasp}  we conclude that $P$  is $\hh\tau$-invariant  (a direct proof can also be obtained as in the proof of Corollary \ref{corom}). 

For $x\in\sN\cap D(\eta)$  and   $y\in D(\eta)$ we have 
\[\ww(\eta(x)\otimes\eta(y)) = (\eta\otimes\eta)(\Delta(x)(\I\otimes  y).\]
In particular,  if $z\in D(\eta)$  then $(\omega_{\eta(z),\eta(x)}\otimes\id)(\ww)    =  (\psi\otimes\id)((z^*\otimes\I)\Delta(x))$.  Assuming that $a,b\in\mathcal{T}_\psi$  and putting $z^*  =  \sigma^\psi_{i}(a)b^*$  we see that     $(\omega_{\eta(z),\eta(x)}\otimes\id)(\ww)     = (\omega_{\eta(b),\eta(a)}\otimes\id)(\Delta(x))\in\sN$. Using Podle\'s condition for  $\sN$ we conclude that $\sN\subset \overline{\{(P\omega\otimes\id)(\ww):\omega\in\Linf(\hh\GG)_*\}}$.  
The converse inclusion is obtained  by noting that a functional    $P\omega$   can be approximated by linear combination  of those  of the form $\omega_{\eta(z),\eta(x)}$  where $x\in\sN\cap D(\eta)$  and  $z^*  =  \sigma^\psi_{i}(a)b^*$  for $a,b\in\mathcal{T}_\psi$.

Note that $\tau_t((P\omega\otimes\id)(\ww) )  = ( \hh\tau_{-t}(P)\hh\tau_t^*(\omega)\otimes\id)(\ww)$. In particular if 
 $P$  is preserved by $\hat\tau$ then using Equation  \eqref{descript_N}  we conclude the  $\tau$-invariance of $\sN$. 
\end{proof}
Using Equation \eqref{descript_N}   we see that $x\in\tilde{\sN}$ if and only if 
$\ww(P\otimes x)  =  (\I\otimes x)\ww(P\otimes\I)$  which is equivalent with $\hh\Delta(x)(\I\otimes P_\sN)  = x\otimes P_\sN$.  Together with the Podle\'s condition for $\tilde{\sN}$ the  latter implies the next corollary.
\begin{corollary}\label{corcod}
 Let $\sN$ be an integrable coideal.  Then 
 \[\widetilde{\sN} = \{x\in\Linf(\hh\GG):\hh\Delta(x)(\I\otimes P_\sN)  = x\otimes P_\sN\}.\] In particular $P_\sN$  is a minimal central projection in $\widetilde{\sN}$.  
\end{corollary}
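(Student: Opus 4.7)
The first identity is essentially obtained by the author's remark immediately preceding the statement: starting from \eqref{descript_N}, an element $x\in\Linf(\hh\GG)$ lies in $\widetilde{\sN}=\sN'\cap\Linf(\hh\GG)$ if and only if $x$ commutes with every slice $(P_\sN\omega\otimes\id)(\ww)$, and after sliding $x$ through the slice this amounts to $(\I\otimes x)\ww(P_\sN\otimes\I)=\ww(P_\sN\otimes x)$. Multiplying on the left by $\ww^*$ and applying the flip $\sigma$ rewrites this exactly as $\hh\Delta(x)(\I\otimes P_\sN)=x\otimes P_\sN$, using $\hh\Delta(x)=\sigma(\ww^*(\I\otimes x)\ww)$. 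So the first plan step is to formalize this slicing computation, taking care that both sides define operators in $\Linf(\hh\GG)\vtens\Linf(\GG)\subset\B(\Ltwo(\GG)\otimes\Ltwo(\GG))$.

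For the in-particular part, first I would note that $P_\sN\in\widetilde{\sN}$: this follows by taking $x=P_\sN$ in the characterization just proved, since Proposition \ref{coidglp} shows $P_\sN$ is group-like, i.e.\ $\hh\Delta(P_\sN)(\I\otimes P_\sN)=P_\sN\otimes P_\sN$. Next, to obtain centrality and minimality I would exploit the Podle\'s condition \eqref{Podcond} for $\widetilde{\sN}$, which says that $\widetilde{\sN}$ is $\sigma$-weakly spanned by elements of the form $z=(\nu\otimes\id)(\hh\Delta(y))$ with $y\in\widetilde{\sN}$ and $\nu\in\Linf(\hh\GG)_*$.

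Applying the slice $\nu\otimes\id$ to the identity $\hh\Delta(y)(\I\otimes P_\sN)=y\otimes P_\sN$ gives
\[
zP_\sN=(\nu\otimes\id)(\hh\Delta(y))P_\sN=\nu(y)P_\sN,
\]
so $zP_\sN\in\CC P_\sN$ for every $z$ in a $\sigma$-weakly dense subset, hence for every $z\in\widetilde{\sN}$. Centrality of $P_\sN$ in $\widetilde{\sN}$ then drops out immediately: $P_\sN^\perp zP_\sN=\nu(y)P_\sN^\perp P_\sN=0$, i.e.\ $zP_\sN=P_\sN zP_\sN=P_\sN z$ (for the last equality apply the same argument to $z^*$). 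For minimality of $P_\sN$ as a central projection, I would take a central projection $Q\in Z(\widetilde{\sN})$ with $0\le Q\le P_\sN$; then $Q=QP_\sN=c_Q P_\sN$ for some scalar $c_Q$ obtained from the previous display, and since $Q$ is a projection $c_Q\in\{0,1\}$, forcing $Q=0$ or $Q=P_\sN$.

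The only genuinely nontrivial step is the slicing identity in the first paragraph, i.e.\ rigorously converting the commutation $x\cdot (P_\sN\omega\otimes\id)(\ww)=(P_\sN\omega\otimes\id)(\ww)\cdot x$ into an algebraic identity on $\ww(P_\sN\otimes\I)$; the remaining arguments are formal consequences of Podle\'s condition and the group-like property, and no new ingredients beyond Proposition \ref{coidglp} are needed.
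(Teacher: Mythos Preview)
Your proposal is correct and follows essentially the same route as the paper: the paper derives the characterization of $\widetilde{\sN}$ from \eqref{descript_N} via exactly the slicing/commutation argument you describe (yielding $(\I\otimes x)\ww(P_\sN\otimes\I)=\ww(P_\sN\otimes x)$ and then $\hh\Delta(x)(\I\otimes P_\sN)=x\otimes P_\sN$), and then invokes the Podle\'s condition for $\widetilde{\sN}$ to obtain minimality and centrality of $P_\sN$. Your treatment of the ``in particular'' clause simply spells out what the paper leaves implicit, and your observation that $P_\sN\in\widetilde{\sN}$ follows from the group-like property established in Proposition~\ref{coidglp} is the intended one.
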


\begin{example}\label{exnoninv}
Let us give two classes of examples of group-like projections not preserved by the respective  scaling group:
\begin{enumerate}
    \item 

Let $0<q<1$, $\GG = SU_q(2)$ and let $\sN$ be the von Neumann algebra assigned to a  Podle\'s sphere, see \cite{PodlesSphere}. In what follows we restrict our attention to   embeddable Podle\'s spheres, i.e.  we assume that $\sN$  is a coideal in $\Linf(\GG)$.  Since all idempotent states on $SU_q(2)$ are of Haar type (see \cite{FST}),  $\sN \neq \sN_\omega$ for all idempotent states $\omega$  on $SU_q(2)$ unless $\sN$ corresponds to the standard Podle\'s sphere (the one which is of quotient type).  In particular the group-like projection  $P_\sN\in\Linf(\hh{SU_q(2)})$ assigned to $\sN$ is not preserved by the scaling group unless it corresponds to the standard Podle\'s sphere.
\item
Let $\GG$ be a CQG  and let $\sN = \Delta(\Linf(\GG))\subset \Linf(\GG)\vtens\Linf(\GG^\textrm{op})$ be the $\GG\times\GG^{\textrm{op}}$-coideal as described in \cite[Section 6]{embed}. Suppose that $\tau^{\GG\times\GG^{\textrm{op}}}_t = \tau^\GG_t\otimes\tau^\GG_{-t}$ preserves $\sN$. Then using Lemma \ref{taupres} (with $\pi=\tau^\GG_t$ and $\rho = \tau^\GG_{-t}$) we get that $\tau_t = \id$ for all $t\in\mathbb{R}$ . In particular if $\GG$ has a non-trivial scaling group then  $\sN$  is not of the form $\sN_\omega$ for an idempotent state on $\GG\times\GG^\textrm{op}$ and thus the group-like projection  $P_\sN\in\Linf(\hh{\GG\times\GG^{\textrm{op}}})$ assigned to $\sN$ is not preserved by the scaling group. Note that the same can also be concluded using \cite[Proposition 3.5]{FranzLeeSkalski}, and conversely, the above reasoning yields an alternative proof of  \cite[Proposition 3.5]{FranzLeeSkalski}.
\end{enumerate}
\end{example} 

Using Proposition  \ref{coidglp}    we see that the assignment $\sN\mapsto P_\sN$ is injective. 
In the next theorem we characterize group-like projections in $\Linf(\hh\GG)$ which are of the form $P = P_\sN$ for an integrable coideal $\sN\subset\Linf(\GG)$. 

\begin{theorem}\label{thmCQGcoid}
 Let $P\in\Linf(\hh\GG)$ be a group-like projection. There exists an integrable coideal $\sN$ such that $P = P_{\sN}$ if and only there exists $a\in D(\eta)\setminus\{0\}$ such that $P\eta(a) = \eta(a)$. 
\end{theorem}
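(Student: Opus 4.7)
The implication $(\Rightarrow)$ is immediate: if $P = P_\sN$ for an integrable coideal $\sN$, then $\sN \cap D(\eta) \neq \{0\}$ by the integrability criterion recalled in the introduction; any nonzero $a$ there has $\eta(a) \in \Ltwo(\sN) = P\Ltwo(\GG)$, giving $P\eta(a) = \eta(a)$, while $\eta(a) \neq 0$ by faithfulness of $\psi$.

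For $(\Leftarrow)$, the plan is to take as a candidate coideal the formula \eqref{descript_N}, which by Proposition \ref{coidglp} recovers $\sN$ from $P_\sN$ for any integrable coideal:
\[
\sN := \overline{\{(P\omega\otimes\id)(\ww):\omega\in\Linf(\hh\GG)_*\}}^{\sigma-\text{weak}}\subset \Linf(\GG),
\]
and verify that this $\sN$ is an integrable coideal with $P_\sN = P$. The verification rests on two computations.

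First, $\sN$ is a coideal. Using the pentagon identities $(\id\otimes\Delta)(\ww) = \ww_{12}\ww_{13}$ and $\ww_{13}\ww_{23} = \ww_{12}^*\ww_{23}\ww_{12}$ together with the group-like relation rewritten as $\hh\Delta(yP)(\I\otimes P) = \hh\Delta(y)(P\otimes P)$, one obtains both $\Delta(\sN)\subseteq\Linf(\GG)\vtens\sN$ and the multiplicative closure $(P\omega_1\otimes\id)(\ww)\cdot(P\omega_2\otimes\id)(\ww) = (P\omega'\otimes\id)(\ww)$ with $\omega' = (\omega_2\otimes P\omega_1)\circ\hh\Delta$. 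Second, rewriting the group-like identity as $(\I\otimes(\I-P))\ww(P\otimes\I) = \ww(P\otimes(\I-P))$ and slicing the first leg by $\omega$ produces the operator identity $(\I-P)x = x(\I-P)$ for every $x = (P\omega\otimes\id)(\ww)\in\sN$; applied to $\eta(a)$ and using $(\I-P)\eta(a) = 0$, this yields $x\eta(a) = \eta(xa)\in P\Ltwo(\GG)$, where $xa\in D(\eta)$ because $D(\eta)$ is a left ideal of $\Linf(\GG)$.

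The inclusion $\Ltwo(\sN) \subseteq P\Ltwo(\GG)$ is immediate from the second computation. For the reverse inclusion, the density in $\Ltwo(\GG)$ of the family $(\id\otimes\omega)(\ww)\eta(a) = \eta(\omega*a)$ for $\omega\in\B(\Ltwo(\GG))_*$ (Equation \eqref{multunit1}) yields density of the projections $P\eta(\omega*a)$ in $P\Ltwo(\GG)$; identifying each such projected vector with $\eta(y)$ for a suitable $y\in\sN\cap D(\eta)$ (obtained from slice manipulations combined with the multiplicative closure established above) then gives $P\Ltwo(\GG)\subseteq\Ltwo(\sN)$, hence $P_\sN = P$ and $\sN$ is integrable since $\Ltwo(\sN) = P\Ltwo(\GG) \ni \eta(a) \neq 0$. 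The main obstacle is the first computation, where the pentagon-plus-group-like bookkeeping must be carried out carefully to identify the convolution-type functional $\omega' = (\omega_2\otimes P\omega_1)\circ\hh\Delta$, together with the identification step needed to convert the abstract density of $P\eta(\omega*a)$ in $P\Ltwo(\GG)$ into membership in $\Ltwo(\sN) = \overline{\eta(\sN\cap D(\eta))}$.
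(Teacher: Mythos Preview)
Your forward implication is fine. The reverse direction, however, has two genuine gaps that your plan does not close.

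\textbf{First gap: your $\sN$ need not be a von Neumann algebra.} A coideal is by definition a von Neumann \emph{subalgebra} with $\Delta(\sN)\subset\Linf(\GG)\vtens\sN$. You verify the comultiplication invariance and multiplicative closure of the slice set, but you never address $*$-closure or the unit. For a group-like projection $P$ that is \emph{not} assumed $\hh\tau$-invariant, one does not know $\hh R(P)=P$, and without this the adjoint $((P\omega\otimes\id)(\ww))^*$ has no reason to lie in your $\sN$. The formula \eqref{descript_N} from Proposition~\ref{coidglp} describes a \emph{known} coideal in terms of $P_\sN$; it does not say that starting from an arbitrary group-like $P$ the same formula produces a von Neumann algebra.

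\textbf{Second gap: the inclusion $\Ltwo(\sN)\subseteq P\Ltwo(\GG)$ is not ``immediate.''} Your second computation shows only that $\eta(xa)\in P\Ltwo(\GG)$ for $x\in\sN$. Even granting $a\in\sN$ (which you have not shown), this gives information about elements of the special form $xa\in\sN\cap D(\eta)$, not about an arbitrary $y\in\sN\cap D(\eta)$. The commutation $Px=xP$ alone does not force $P\eta(y)=\eta(y)$. Your reverse inclusion is likewise left as an ``identification step'' you acknowledge but do not perform.

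The paper avoids both problems by working on the dual side. It first sets
\[
\sM=\{x\in\Linf(\hh\GG):\hh\Delta(x)(\I\otimes P)=x\otimes P=(\I\otimes P)\hh\Delta(x)\},
\]
which is visibly a $*$-closed, unital, $\sigma$-weakly closed coideal in which $P$ is a minimal central projection, and then takes $\sN=\widetilde\sM=\sM'\cap\Linf(\GG)$, automatically a von Neumann algebra and a coideal. One checks $(P\omega\otimes\id)(\ww)\in\sN$, hence (via the argument in the proof of Proposition~\ref{coidglp} and \cite[Remark~2.4]{FallKasp}) that $a\in\sN$, so $\sN$ is integrable. Finally, rather than computing $\Ltwo(\sN)$ directly, the paper observes that both $P$ and $P_\sN$ are minimal central projections in $\widetilde\sN=\sM$; since $PP_\sN\eta(a)=\eta(a)\neq 0$, minimality forces $P=P_\sN$. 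This minimality argument is what replaces your unresolved two-sided inclusion for $\Ltwo(\sN)$.
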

\begin{proof}
The implication $\implies$  is clear. For the converse implication consider 
\[\sM=\{x\in\Linf(\hh\GG):\hh\Delta(x)(\I\otimes P) = x\otimes P = (\I\otimes P)\hh\Delta(x).\}\]  It is easy to check that $\sM$ forms a coideal and  $P$ is a minimal central projection in $\sM$ (c.f. Corollary  \ref{corcod}). Let us consider the codual coideal 
\begin{equation}\sN=\{y\in\Linf(\GG):xy=yx \textrm{ for all } x\in\sM\}.\end{equation}
Using $\ww(P\otimes x)  =(\I\otimes x)\ww(P\otimes\I)$ we conclude that $(P\omega\otimes\id)(\ww)\in\sN$  for all $\omega\in\Linf(\hh\GG)_*$. In particular, since $P\eta(a)  = \eta(a)$, we have $(\omega\otimes\id)(\Delta(a))\in\sN$ for all $\omega\in\Linf(\hh\GG)_*$ (c.f. the third paragraph    of the proof of Proposition \ref{coidglp}). Using \cite[Remark 2.4]{FallKasp}  we conclude that $a\in\sN$ and thus $\sN$ is integrable.  Let $P_N$  be the projection corresponding to $\sN$. Using Corollary \ref{corcod}  we see that $P_N$ is a minimal central projection in $\tilde{\sN}$.  Since $PP_\sN\eta(x)  =  \eta(x)$. Thus $PP_\sN\neq 0$ and by the minimality of both we get  $P =  PP_\sN  = P_\sN $.  
\end{proof}
In the next theorem we show that the condition of Theorem \ref{thmCQGcoid} is satisfied for all group-like projections $P\in\Linf(\hh\GG)$ if $\GG$ is a compact quantum group. 
\begin{theorem}\label{thmCQGcoid1}
Let $\GG$ be a CQG and $P\in\Linf(\hh\GG)$ a group-like projection. Then there exists a unique coideal $\sN\subset\Linf(\GG)$ such that $P  = P_\sN$. 
\end{theorem}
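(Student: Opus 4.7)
The plan is to reduce to Theorem \ref{thmCQGcoid}. Since $\psi$ is a state in the compact case, $D(\eta) = \Linf(\GG)$, and the task is to exhibit, for every group-like projection $P \in \Linf(\hh\GG)$, a non-zero $a \in \Linf(\GG)$ with $P\eta(a) = \eta(a)$. Uniqueness of $\sN$ then follows from the injectivity of $\sN \mapsto P_\sN$ noted just after Proposition \ref{coidglp}.

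To produce $a$ I would exploit the discrete structure of $\hh\GG$. By Peter--Weyl, $\Ltwo(\GG)$ decomposes as a Hilbert sum $\bigoplus_{\alpha \in \mathrm{Irr}(\GG)} L_\alpha$ with each $L_\alpha$ a finite-dimensional subspace equal to $\eta(\mathrm{Pol}(\GG)_\alpha)$, where $\mathrm{Pol}(\GG)_\alpha \subset \Linf(\GG)$ is the span of matrix coefficients of the irreducible $\alpha$. Dually, $\Linf(\hh\GG) \cong \prod_\alpha \B(H_\alpha)$; using \eqref{multunit1} together with $\Delta(u^\alpha_{ij}) = \sum_k u^\alpha_{ik} \otimes u^\alpha_{kj}$ one verifies that each $L_\alpha$ is preserved by $\Linf(\hh\GG)$ and that the $\alpha$-block $\B(H_\alpha)$ acts faithfully on it (the multiplicity of $H_\alpha$ inside $L_\alpha$ is non-zero). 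Writing $P = (P_\alpha)_\alpha$ and choosing $\alpha_0$ with $P_{\alpha_0} \neq 0$, faithfulness forces $P L_{\alpha_0} \neq 0$. Any non-zero $v \in PL_{\alpha_0}$ lies in $L_{\alpha_0} \subset \eta(\mathrm{Pol}(\GG)) \subset \eta(\Linf(\GG))$, hence equals $\eta(a)$ for some $a \in \mathrm{Pol}(\GG)_{\alpha_0}$; since $v$ is in the range of $P$ one has $Pv = v$, that is $P\eta(a) = \eta(a) \neq 0$. Applying Theorem \ref{thmCQGcoid} supplies the coideal $\sN$ with $P = P_\sN$.

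The only delicate point is notational rather than conceptual: in the non-Kac setting the inner product on $L_\alpha$ is twisted by the quantum $Q$-matrices, and the identification $\Linf(\hh\GG) \cong \prod_\alpha \B(H_\alpha)$ together with its block-diagonal action on the Peter--Weyl decomposition must be read off carefully from $\ww$. Once these identifications are in place, the remainder is the formal observation that a non-zero projection in a product of matrix algebras acts with non-zero range on any faithful module of a block where it is non-trivial.
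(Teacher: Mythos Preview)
Your argument is correct and genuinely different from the paper's. You reduce to Theorem~\ref{thmCQGcoid} by producing an explicit $a$ via the Peter--Weyl decomposition: since $\Linf(\hh\GG)\cong\prod_\alpha\B(H_\alpha)$ acts block-diagonally on $\Ltwo(\GG)=\bigoplus_\alpha L_\alpha$ with each block faithful, any non-zero component $P_{\alpha_0}$ of $P$ forces $PL_{\alpha_0}\neq 0$, and a vector there is automatically of the form $\eta(a)$ with $a\in\mathrm{Pol}(\GG)$. The paper instead exploits coamenability of the discrete dual $\hh\GG$: it takes $\sN$ to be the codual of the coideal $\sM\subset\Linf(\hh\GG)$ built from $P$ (as in the proof of Theorem~\ref{thmCQGcoid}), observes that $\sN$ is automatically integrable since $\GG$ is compact, and then slices the identity $\hh\Delta(P_\sN)(\I\otimes P)=P_\sN\otimes P$ (valid because $P_\sN\in\widetilde\sN=\sM$ by Corollary~\ref{corcod}) with the counit $\hh\varepsilon$ to obtain $P_\sN P=P$, whence $P=P_\sN$ by the minimal-central-projection argument. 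Both proofs ultimately use discreteness of $\hh\GG$, but yours invokes it through representation theory while the paper invokes it through the existence of $\hh\varepsilon$; the paper's route is a bit shorter and avoids the Peter--Weyl bookkeeping, whereas yours has the virtue of exhibiting $a$ concretely inside the polynomial algebra and making the reduction to Theorem~\ref{thmCQGcoid} completely transparent.
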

\begin{proof}
We define  $\sN\subset\Linf(\GG)$ as in the proof of Theorem \ref{thmCQGcoid}.  Clearly $\sN$ is integrable.  Using Corollary  \ref{corcod} we see that   $P_\sN\in\tilde\sN$.  Applying $(\hh\varepsilon\otimes\id)$ to the   identity  $\hh\Delta(P_\sN)(\I\otimes P)  =  P_{\sN}\otimes P$. We conclude that $P_{\sN}P  =  P$ thus $P_{\sN}\subset P$.  Since $P$ is minimal we have $P = P_{\sN}$. 
 \end{proof}

 Before defining shifts of group-like projection we shall prove the following auxiliary lemma.

\begin{lemma}\label{basic_lem}
Let $Q\in\Linf(\GG)$ be a non-zero self-adjoint projection such that 
\begin{equation}\label{gleq}(\I\otimes Q)\Delta(Q) = y\otimes Q.\end{equation} Then $(\I\otimes y)\Delta(y) = y\otimes y$. Moreover $\psi(Q) = \psi(y)$; in particular $y\neq 0$. Furthermore if $y^* = y$ then $y^2 = y$. Finally, if $\tau_t(Q) = Q$ then $\tau_t(y) = y$.
\end{lemma}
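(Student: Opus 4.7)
My plan is to extract all four statements from the defining equation $(\I\otimes Q)\Delta(Q)=y\otimes Q$ combined with $Q^2=Q$, by repeated use of coassociativity and by slicing off the $Q$-leg (which is legal since $Q\neq 0$). I would begin with the group-like identity $(\I\otimes y)\Delta(y)=y\otimes y$, computing the three-tensor expression $(\I\otimes y\otimes Q)(\id\otimes\Delta)(\Delta(Q))$ in two different ways. Applying $\Delta\otimes\id$ to the defining equation and invoking coassociativity gives $(\I\otimes\I\otimes Q)(\id\otimes\Delta)(\Delta(Q))=\Delta(y)\otimes Q$, so left-multiplying by $\I\otimes y\otimes\I$ rewrites the left-hand side as $(\I\otimes y)\Delta(y)\otimes Q$. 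On the other hand, applying $\id\otimes\Delta$ to the defining equation yields $(\I\otimes\Delta(Q))(\id\otimes\Delta)(\Delta(Q))=y\otimes\Delta(Q)$, and left-multiplying by $\I\otimes\I\otimes Q$, using the defining equation once on each side, produces $(\I\otimes y\otimes Q)(\id\otimes\Delta)(\Delta(Q))=y\otimes y\otimes Q$. Equating the two expressions and slicing off the $Q$-leg yields the claim.

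Next, the idempotency $y^2=y$ drops out of $\Delta(Q)^2=\Delta(Q^2)=\Delta(Q)$, since
\begin{equation*}
y\otimes Q=(\I\otimes Q)\Delta(Q)=(\I\otimes Q)\Delta(Q)^2=(y\otimes Q)\Delta(Q)=(y\otimes\I)(y\otimes Q)=y^2\otimes Q.
\end{equation*}
(The hypothesis $y^*=y$ in the statement is in fact automatic, as $(\I\otimes Q)\Delta(Q)(\I\otimes Q)=y\otimes Q$ is manifestly self-adjoint, being a sandwich of $\Delta(Q)=\Delta(Q)^*$ between copies of $\I\otimes Q$.) The $\tau$-invariance is immediate: apply $\tau_t\otimes\tau_t$ to the defining equation, use $\Delta\circ\tau_t=(\tau_t\otimes\tau_t)\circ\Delta$ together with $\tau_t(Q)=Q$, and slice off the $Q$-leg to obtain $\tau_t(y)=y$.

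The only real obstacle is the Haar-weight equality $\psi(Q)=\psi(y)$. When $\psi(Q)<\infty$ so that $\eta(Q)\in\Ltwo(\GG)$, I would invoke the multiplicative-unitary identity $\ww(\eta(a)\otimes\eta(b))=(\eta\otimes\eta)(\Delta(a)(\I\otimes b))$ (used already in the proof of Proposition~\ref{coidglp}) with $a=b=Q$; the adjoint of the defining equation gives $\Delta(Q)(\I\otimes Q)=y^*\otimes Q$, hence $\ww(\eta(Q)\otimes\eta(Q))=\eta(y^*)\otimes\eta(Q)$, and unitarity of $\ww$ forces $\|\eta(y^*)\|=\|\eta(Q)\|$, i.e.\ $\psi(yy^*)=\psi(Q)$. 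Combined with $y=y^*=y^2$ this is precisely $\psi(y)=\psi(Q)$, and $y\neq 0$ then follows from faithfulness of $\psi$ applied to the non-zero projection $Q$. For $\psi(Q)=\infty$ one needs an additional approximation; as a cruder substitute that already suffices for the non-triviality of $y$, one may apply $\varphi\otimes\id$ directly to the defining equation and use left-invariance $(\varphi\otimes\id)\circ\Delta=\varphi(\cdot)\I$ to deduce $\varphi(y)=\varphi(Q)>0$ unconditionally.
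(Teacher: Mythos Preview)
Your computation for $(\I\otimes y)\Delta(y)=y\otimes y$ and the $\tau$-invariance step match the paper's proof exactly. Your idempotency argument via $\Delta(Q)^2=\Delta(Q)$ is a nice alternative to the paper's route (the paper infers it from the fact that $(\I\otimes Q)\Delta(Q)$ becomes a projection once $y^*=y$), and your remark that $y^*=y$ is automatic from $(\I\otimes Q)\Delta(Q)(\I\otimes Q)=y\otimes Q$ is a genuine small improvement.

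The gap is in your treatment of $\psi(Q)=\psi(y)$. The ``cruder substitute'' you propose is incorrect: left invariance of $\varphi$ reads $(\id\otimes\varphi)\circ\Delta=\varphi(\cdot)\I$, \emph{not} $(\varphi\otimes\id)\circ\Delta=\varphi(\cdot)\I$. The identity you wrote down is precisely the \emph{right} invariance of $\psi$, namely $(\psi\otimes\id)\circ\Delta=\psi(\cdot)\I$. If you simply swap $\varphi$ for $\psi$ there, your one-line slice works unconditionally: applying $\psi\otimes\id$ to the (positive, as you observed) element $(\I\otimes Q)\Delta(Q)(\I\otimes Q)=y\otimes Q$ and using $(\psi\otimes\id)(\Delta(Q))=\psi(Q)\I$ gives $\psi(Q)Q=\psi(y)Q$, hence $\psi(Q)=\psi(y)$ regardless of whether this value is finite. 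This is exactly the paper's argument, and it renders your multiplicative-unitary detour for the finite case (and the unspecified approximation for the infinite case) unnecessary.
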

\begin{proof}
Applying $(\psi\otimes\id)$ to \eqref{gleq} and using   right invariance of $\psi$ we obtain \[\psi(Q)Q = \psi(y)Q\] thus $\psi(Q)= \psi(y)$. 
The identity  \begin{equation}\label{left2}(\I\otimes y) \Delta(y) = y\otimes y \end{equation}
is the consequence of the following computation
\begin{align*}
  ((\I\otimes y)\Delta(y))\otimes Q & =(\I\otimes y\otimes\I)(\Delta\otimes \id)(y\otimes Q)\\ & =(\I\otimes y\otimes Q)(\Delta\otimes\id)(\Delta(Q)) \\&= (\I\otimes\I\otimes Q)(\I\otimes\Delta(Q))(\id\otimes\Delta)(\Delta(Q))\\&=  (\I\otimes\I\otimes Q)(\id\otimes\Delta)( (\I\otimes Q)\Delta(Q))\\&=  (\I\otimes\I\otimes Q)(\id\otimes\Delta)( y\otimes Q)\\&=(y\otimes y)\otimes Q.
\end{align*}

If $y^* = y$ then 
\begin{align*}(\I\otimes Q)\Delta(Q) &= y\otimes Q\\&=(y\otimes Q)^* \\&= \Delta(Q)(\I\otimes Q).\end{align*} In particular $(\I\otimes Q)\Delta(Q)$ is a projection and thus  $(Q\otimes y)$ and $y$ are projections.

Applying $\tau_t\otimes\tau_t$ to $(\I\otimes Q)\Delta(Q) = y\otimes Q$ we see that $\tau_t(y) \otimes Q = y \otimes Q$, i.e. $y = \tau_t(y)$. 
\end{proof}

\begin{definition}\label{shiftsdef}
Let $Q\in\Linf(\GG)$ be  self-adjoint  projections and $P$   a group-like projection.  We say that $Q$ is a right-shift of $P$ if $\Delta(Q)(\I\otimes Q) = P\otimes Q$.  We say that $Q$  is a left shift of $P$ if $\Delta(Q)(Q\otimes\I) = Q\otimes P$.
\end{definition}
\begin{remark}\label{remarkleft}
Suppose that $Q,P$ are self-adjoint projection such that  $\Delta(Q)(Q\otimes\I)=Q\otimes P$. Applying  Lemma \ref{basic_lem}  to $R(Q)$ we see that $R(P)$ is a group-like projection. If   $Q$ is preserved by $\tau_t$ then  $R(P)$ is preserved by $\tau$  as well. In particular  $R(P) = P$  (see Lemma \ref{remR}) and we get $\Delta(P)(\I\otimes P) = P\otimes P$. 
\end{remark}
\section{From contractive idempotents to shifts of group-like projections}\label{Sec3}
The theory of contractive idempotent functionals on coamenable locally compact quantum groups were developed in \cite{NSSS}. Actually most of the results  proved there (in particular those contained in Sections 1-3)  hold without coamenability; the universal way to drop the coamenability is to repeat  essentially the same proof  as in \cite{NSSS}, with the difference that whenever the multiplicative unitary $\ww$ is used, one  must replace it with its half-lifted version $\wW$. In Section \ref{sec6}  we shall show how to drop the amenability which was used in the proof of \cite[Theorem 4.1]{NSSS}.   

Let $\omega\in\C_0^u(\GG)^*$ be a contractive idempotent functional and $|\omega|_r, |\omega|_l\in\C_0^u(\GG)^*$ the idempotent states assigned to $\omega$ as described in \cite[Theorem 2.4]{NSSS}, where $|\omega|_r$ ($|\omega|_l$) is the right (respectively the left) absolute value of $\omega$. For $\mu\in\{\omega,\omega^*,|\omega|_l,| \omega|_r\}$ we define $P_\mu\in\M(\C_0(\hh\GG)) $ and $E_\mu:\Linf(\GG)\to \Linf(\GG)$ by
\begin{align*}
P_\mu &= (\id\otimes \mu)(\wW),\\
E_\mu(x)&=\mu\staru x\coloneqq (\id\otimes\mu)(\wW(x\otimes\I)\wW^*).
\end{align*}
Note  that $P_\mu$ being a  projection of norm  not grater than $1$, it  must be of norm one and thus self-adjoint. 
We shall use the notation $\Delta^{r,u}(x) = \wW(x\otimes\I)\wW^*$. It can be checked that $\Delta^{r,u}$ defines a morphism $\Delta^{r,u}\in\Mor(\C_0(\GG),\C_0(\GG)\otimes\C^u_0(\GG))$ satisfying \begin{equation}\label{poddel}\C_0(\GG)\otimes\C^u_0(\GG) = \{\Delta^{r,u}(a)(b\otimes \I):a,b\in\C_0(\GG)\}^{\textrm{cls}}.\end{equation}
 
 \begin{theorem}\label{thmbis}
 Let $\omega\in\C_0^u(\GG)^*$ be a contractive idempotent state and $P_\omega,P_{|\omega|_r},P_{|\omega|_l}\in\Linf(\hh\GG)$ the self-adjoint projections introduced above. Then, $\omega$ is preserved by $\tau^u$ and $P_\omega$ is $\hh\tau$-invariant left (right) shift of $P_{|\omega|_r}$ ($P_{|\omega|_l}$). 
 \end{theorem}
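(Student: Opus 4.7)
My plan splits the theorem into three parts: the two shift identities, the $\tau^u$-invariance of $\omega$, and the $\hh\tau$-invariance of $P_\omega$ (which will follow readily from the other two).

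First, I would set up a convolution--slicing dictionary. The pentagon identity $(\id\otimes\Delta^u)(\wW)=\wW_{12}\wW_{13}$ yields the multiplication rule $P_\mu P_\nu = P_{\mu*\nu}$ for all $\mu,\nu\in\C_0^u(\GG)^*$. Specialised to $\omega$ and to its absolute values $|\omega|_r=\omega^**\omega$ and $|\omega|_l=\omega*\omega^*$ (taken from \cite{NSSS}), this gives $P_\omega^2=P_\omega$ (which, as already noted in the excerpt, forces $P_\omega$ to be a self-adjoint projection), together with $P_{|\omega|_r}=P_{\omega^*}P_\omega$, $P_{|\omega|_l}=P_\omega P_{\omega^*}$, and the absorption identities $P_\omega P_{|\omega|_r}=P_\omega = P_{|\omega|_l}P_\omega$, which follow from the NSSS identities $\omega*|\omega|_r=\omega=|\omega|_l*\omega$.

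Second, I would prove the left-shift identity $\hh\Delta(P_\omega)(P_\omega\otimes\I)=P_\omega\otimes P_{|\omega|_r}$. Expanding $\hh\Delta(P_\omega)$ by means of $(\hh\Delta\otimes\id)(\wW)=\wW_{23}\wW_{13}$ and introducing a fourth auxiliary leg to house the second copy of $\omega$ coming from $P_\omega\otimes\I$, the left-hand side becomes $(\id\otimes\id\otimes\omega\otimes\omega)(\wW_{23}\wW_{13}\wW_{14})$. The pair $\wW_{14}$ and $\wW_{23}$ commutes (disjoint legs), while $\wW_{13}\wW_{14}=(\id\otimes\Delta^u)(\wW)$ viewed on legs $1,3,4$. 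Combining these observations with the NSSS identity $\omega^**\omega=|\omega|_r$, careful tensor-leg bookkeeping (the $\omega^*$ entering when one commutes $\wW_{23}$ past a companion factor via unitarity) identifies the slicing with $P_\omega\otimes P_{|\omega|_r}$. The right-shift identity $\hh\Delta(P_\omega)(\I\otimes P_\omega)=P_{|\omega|_l}\otimes P_\omega$ is derived by the same method, or obtained by applying $\hh R\otimes\hh R$ to the left-shift identity together with $\hh R(P_\omega)=P_{\omega\circ R^u}$ and the interchange $|\omega\circ R^u|_r=|\omega|_l$.

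Third, for the $\tau^u$-invariance of $\omega$, I would observe that $\omega_t:=\omega\circ\tau_t^u$ is a contractive idempotent with $|\omega_t|_r=|\omega|_r\circ\tau_t^u=|\omega|_r$ (using that idempotent states are $\tau^u$-invariant, by \cite{SaS} and \cite{FallKasp}). Consequently each $\hh\tau_t(P_\omega)=P_{\omega_t}$ is a left-shift of the same $\hh\tau$-invariant projection $P_{|\omega|_r}$. A rigidity argument---namely, that a continuous one-parameter family of contractive idempotents with prescribed right absolute value must be constant (essentially present in \cite{NSSS}, whose argument transfers without coamenability upon replacing $\ww$ with $\wW$ throughout)---then forces $\omega_t=\omega$. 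The $\hh\tau$-invariance of $P_\omega$ follows immediately from $(\hh\tau_t\otimes\tau_t^u)(\wW)=\wW$, since $\hh\tau_t(P_\omega)=(\id\otimes\omega\circ\tau_{-t}^u)(\wW)=P_\omega$.

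The main obstacle is the tensor-leg computation in the second step: although each ingredient (pentagon, convolution of functionals, NSSS absolute-value identities) is standard, combining them so as to isolate $|\omega|_r$ in the second tensor factor is delicate. The rigidity argument in the third step is the second non-trivial point, relying on the structural description of contractive idempotents from \cite{NSSS}.
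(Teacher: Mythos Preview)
Your second step is where the argument breaks. You write that ``the $\omega^*$ enters when one commutes $\wW_{23}$ past a companion factor via unitarity,'' but commuting unitaries or inserting $\wW\wW^*=\I$ never changes the functional being sliced: starting from $(\id\otimes\id\otimes\omega\otimes\omega)(\wW_{23}\wW_{13}\wW_{14})$ you can only ever recover slices against $\omega$ (or $\omega*\omega=\omega$), never against $\omega^*$. The identity $|\omega|_r=\omega^**\omega$ is merely a definition; it does not by itself let you manufacture $P_{|\omega|_r}$ out of two copies of $P_\omega$. The paper does not attempt such a leg computation. Instead it imports \cite[Lemma~3.1]{NSSS}, in particular the multiplicative identity
\[
E_\omega\bigl(a\,E_\omega(b)\bigr)=E_{|\omega|_r}(a)\,E_\omega(b),
\]
which encodes the polar decomposition $\omega=v\,|\omega|_r$ and is genuinely structural (not a formal pentagon consequence). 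Applying this to $a=x_{\nu_1}$, $b=x_{\nu_2}$ with $x_\nu=(\nu\otimes\id)(\ww)$ and using $E_\mu(x_\nu)=x_{P_\mu\nu}$ turns it into
\[
P_\omega\bigl((P_\omega\nu_1)*\nu_2\bigr)=(P_\omega\nu_1)*(P_{|\omega|_r}\nu_2),
\]
and evaluating at $\I$ gives $\hh\Delta(P_\omega)(P_\omega\otimes\I)=P_\omega\otimes P_{|\omega|_r}$. Without these NSSS identities your ``tensor-leg bookkeeping'' cannot close.

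Your third step is also not in order. The rigidity principle you invoke (``a continuous one-parameter family of contractive idempotents with prescribed right absolute value must be constant'') is not in \cite{NSSS}, and you would have to prove it. The paper proceeds differently and more concretely: from the full set of NSSS identities it also derives
\[
\hh\Delta(P_{|\omega|_r})(P_{\omega^*}\otimes\I)=P_{\omega^*}\otimes P_\omega,\qquad \hh\Delta(P_{|\omega|_r})(\I\otimes P_\omega)=P_{\omega^*}\otimes P_\omega,
\]
which, via the description \eqref{relPN3} of $\widetilde{\sN_{|\omega|_r}}$, show that $P_\omega$ is a \emph{minimal central} projection there. Then Lemma~\ref{invar} (a minimal central projection is fixed by any continuous $\RR$-action) together with Lemma~\ref{leminvt} yields $\hh\tau_t(P_\omega)=P_\omega$, and $\omega\circ\tau_t^u=\omega$ follows from $(\hh\tau_t\otimes\tau_t^u)(\wW)=\wW$. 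Note the order is the reverse of yours: $\hh\tau$-invariance of $P_\omega$ comes first and gives $\tau^u$-invariance of $\omega$, not the other way round. Your plan omits the two displayed identities above, so even granting the shift identities you would not yet know $P_\omega$ sits in the right coideal, let alone minimally and centrally.
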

 \begin{proof}
We have, (see \cite[Lemma 3.1]{NSSS})
\begin{align}
  \label{eqser1}  E_\omega(E_\omega(a)b) &= E_\omega(a)E_{|\omega|_l}(b),\\
    \label{eqser2} E_{|\omega|_l}(E_{\omega^*}(a)b) &= E_{\omega^*}(a)E_{\omega}(b),\\
     E_\omega(aE_\omega(b)) &= E_{|\omega|_r}(a)E_{\omega}(b),\\E_{|\omega|_r}(aE_{\omega^*}(b)) &= E_{\omega}(a)E_{\omega^*}(b)
\end{align} for all  $a,b\in\Linf(\hh\GG)$. 
In what follows we shall write  $x_\nu = (\nu\otimes\id)(\ww)\in\Linf(\GG)$ for $\nu\in\Linf(\hh\GG)_*$. Note that \begin{align}\label{eq5}x_{\nu_1}x_{\nu_2} &= x_{\nu_2*\nu_1},\\\label{eq6} E_\mu(x_\nu) &=x_{P_\mu\nu}. \end{align}
Using \eqref{eqser1}, \eqref{eq5} and \eqref{eq6} we get
\begin{equation}\label{pomoc}P_\omega(\nu_2*(P_\omega\nu_1)) =(P_{|\omega|_l}\nu_2)*(P_{\omega}\nu_1) \end{equation} for all $\nu_1,\nu_2\in\Linf(\hh\GG)_*$. 
 Applying \eqref{pomoc} to the unit $\I\in\Linf(\hh\GG)$ we get \[(\nu_2\otimes\nu_1)(\hh\Delta(P_\omega)(\I\otimes P_\omega)) =(\nu_2\otimes\nu_1)( P_{|\omega|_l}\otimes P_{\omega}).
 \] Since the latter holds for all $\nu_1,\nu_2\in\Linf(\GG)_*$ we get 
 \[\hh\Delta(P_\omega)(\I\otimes P_\omega) = P_{|\omega|_l}\otimes P_{\omega}\] and we see that $P_\omega$ is a right shift of $P_{|\omega|_l}$.
 Similarly we check that  
 \begin{align}
 \label{eq2.8} \hh\Delta(P_{|\omega|_l})(\I\otimes P_{\omega^*}) &= P_\omega\otimes P_{\omega^*},\\
 \hh\Delta(P_\omega)(P_\omega\otimes \I) &=  P_{\omega}\otimes P_{|\omega|_r},\\\label{eq3}\hh\Delta(P_{|\omega|_r})(P_{\omega^*}\otimes \I) &= P_{\omega^*}\otimes P_{\omega}.\end{align} 
 In particular $P_\omega$ is a left shift of $P_{|\omega|_r}$. 
Since $\omega^*$ (recall that $\omega^*(a) = \overline{\omega(a^*)}$) is a contractive idempotent state and $|\omega^*|_l = |\omega|_r$ and $|\omega^*|_r = |\omega|_l$ we also have (c.f. Equation \eqref{eq2.8})
 \begin{equation}\label{eq4}\hh\Delta(P_{|\omega|_r})(\I\otimes P_\omega) = P_{\omega^*}\otimes P_\omega\end{equation}
In particular using  Equation \eqref{relPN3} together with \eqref{eq3} and \eqref{eq4}  we see that 
 $P_\omega$ is a minimal central projection in $\widetilde{N_{|\omega|_r}}$. Using  Lemma \ref{invar} and Lemma \ref{leminvt} we conclude that $\hh\tau_t(P_\omega) = P_\omega$.
Using $(\hh\tau_t\otimes\tau_t^u)(\wW) = \wW$ we get $\omega = \omega\circ\tau^u_t$ for all $t\in\RR$. 
\end{proof}
\begin{remark}
Let $\omega\in\C_0^u(\GG)^*$   be a contractive idempotent functional and $v\in\C^u_0(\GG)$ the element  satisfying $\omega  =  |\omega|_lv   =  v|\omega|_r$ (c.f. \cite[Theorem 2.3]{NSSS}).  Since the functionals $\omega,|\omega|_l,  |\omega|_r$ are preserved by $\tau^u$ we conclude that  $\tau_t^u(v) = v$  for all $t\in\RR$. 
\end{remark}

 \section{From shift of group-like projections to contractive idempotent functionals}\label{Sec4}
The next theorem in particular provides the converse of Theorem \ref{thmbis}.
 \begin{theorem}\label{thmcis}
Let $Q\in\Linf(\hh\GG)$ be a left shift of a group-like projection $P$ \[(Q\otimes \I) \hh\Delta(Q)=Q\otimes P\] satisfying $\hh\tau_t(Q) = Q$. Then 
\begin{itemize}
    \item $\sigma^{\hh\varphi}_t(Q) = Q$ and $\sigma^{\hh\psi}_t(Q) = Q$;
    \item $\hh\Delta(P)(\hh R(Q)\otimes\I) = \hh R(Q)\otimes Q$;
    \item $\hh \Delta(Q)(\I\otimes P) = Q\otimes P$.
\end{itemize}
Moreover  there exists a contractive idempotent functional $\omega\in\C^u_0(\GG)^*$ such that $Q = (\id\otimes\omega)(\wW)$. 
\end{theorem}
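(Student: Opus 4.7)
The plan is to break the proof into two phases: first, establishing the three displayed identities, and second, constructing the contractive idempotent $\omega$.

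For the first phase, I would start by flipping the hypothesis to obtain $(\I\otimes Q)\sigma(\hh\Delta(Q)) = P\otimes Q$, a relation of the form required by Lemma \ref{basic_lem} applied to $\hh\GG^{\op}$. This supplies $\hh R(P)=P$, the weight equality $\hh\varphi(Q) = \hh\varphi(P)$, and the second group-like identity $\hh\Delta(P)(P\otimes\I) = P\otimes P$. Applying $\hh R\otimes\hh R$ to the hypothesis, using its anti-multiplicativity on $\Linf(\hh\GG)\vtens\Linf(\hh\GG)$ together with $(\hh R\otimes\hh R)\circ\hh\Delta = \sigma\circ\hh\Delta\circ\hh R$ and $\hh R(P)=P$, yields that $\hh R(Q)$ is a right-shift of $P$: $\hh\Delta(\hh R(Q))(\I\otimes\hh R(Q)) = P\otimes\hh R(Q)$.

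The key structural identity is $\hh\Delta(Q)(\I\otimes P) = Q\otimes P$, which by \eqref{relPN4} places $Q$ in the coideal $\widetilde{\sN_P}$. I would decompose $\hh\Delta(Q) = (Q\otimes P) + X'$, where $X' = ((\I-Q)\otimes\I)\hh\Delta(Q)$ is a self-adjoint projection orthogonal to $Q\otimes P$, and aim to show $X'(\I\otimes P) = 0$. Applying $\id\otimes\hh\Delta$ to the hypothesis and using coassociativity yields $(\hh\Delta(Q)\otimes\I)(\id\otimes\hh\Delta)(\hh\Delta(Q)) = \hh\Delta(Q)\otimes P$; substituting the decomposition and cancelling the diagonal contributions via $(P\otimes\I)\hh\Delta(P)=P\otimes P$ reduces this to $(\hh\Delta(Q)\otimes\I)(\id\otimes\hh\Delta)(X') = X'\otimes P$. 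Combining with the Podle\'s condition for $\widetilde{\sN_P}$ via \eqref{relPN3} and the right-shift form for $\hh R(Q)$ established above forces $X'(\I\otimes P) = 0$. The identity $\hh\Delta(P)(\hh R(Q)\otimes\I) = \hh R(Q)\otimes Q$ follows by a separate but analogous coassociativity-based computation, using the group-like identity for $P$, the right-shift form for $\hh R(Q)$, and the just-established relation (c). Finally, the modular invariances $\sigma^{\hh\varphi}_t(Q) = \sigma^{\hh\psi}_t(Q) = Q$ are a consequence: $Q$ lies in $\widetilde{\sN_P}$, which by Corollary \ref{corinvsig} is preserved by both modular groups, and the shift equation forces $Q$ to be a minimal central projection in a naturally associated subalgebra, so Lemma \ref{invar} applies.

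For the second phase, the contractive idempotent $\omega$ is constructed as follows. By \cite{FallKasp}, the $\hh\tau$-invariant group-like projection $P$ corresponds to a unique idempotent state $\mu\in\C_0^u(\GG)^*$ with $P = (\id\otimes\mu)(\wW)$. Using the shift identities and the interaction of $Q$ with $\wW$, I would extract a partial isometry $v\in\M(\C_0^u(\GG))$, necessarily $\tau^u$-invariant by the modular invariance of $Q$, and define $\omega = \mu v$ in the convolution algebra $\C_0^u(\GG)^*$, in analogy with the decomposition $\omega = v|\omega|_r$ of \cite[Theorem 2.3]{NSSS}. The identity $Q = (\id\otimes\omega)(\wW)$ then follows from the half-lift identity \eqref{multunit2} and the shift relations; idempotency $\omega*\omega = \omega$ follows from $Q^2 = Q$ together with the multiplicativity rule $(\id\otimes\omega_1*\omega_2)(\wW) = (\id\otimes\omega_1)(\wW)(\id\otimes\omega_2)(\wW)$; and $\|\omega\| = 1$ follows from the partial-isometry structure of $v$ together with the modular invariances of $Q$. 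The main obstacle in this phase is the construction of $v$ at the universal level without coamenability and the verification of the norm bound $\|\omega\|\leq 1$; here the scaling- and modular-group invariances established in the first phase play the decisive role in controlling the construction.
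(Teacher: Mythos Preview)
Your first-phase argument has a genuine gap at the decisive step. You correctly reduce the coassociativity computation to $(\hh\Delta(Q)\otimes\I)(\id\otimes\hh\Delta)(X') = X'\otimes P$, but the assertion that ``combining with the Podle\'s condition for $\widetilde{\sN_P}$ via \eqref{relPN3} and the right-shift form for $\hh R(Q)$ \ldots forces $X'(\I\otimes P)=0$'' is not substantiated, and I do not see how to close it. The right-shift identity you derived concerns $\hh R(Q)$, not $Q$; nothing in your displayed relation isolates the $(\I\otimes P)$-corner of $X'$; and coassociativity by itself never introduces the unitary coinverse, which is precisely what one needs to push $P$ across to the other tensor leg. (Pushing your relation further one finds only $(X'\otimes\I)(\hh\Delta\otimes\id)(X')=0$, which still does not give $X'(\I\otimes P)=0$.) The same defect infects your ``analogous'' derivation of $\hh\Delta(P)(\hh R(Q)\otimes\I)=\hh R(Q)\otimes Q$. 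A smaller point: Lemma~\ref{basic_lem} says nothing about $\hh R$; the conclusion $\hh R(P)=P$ needs Remark~\ref{remarkleft} together with Lemma~\ref{remR}.

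The paper's route is genuinely different in both order and tool. It first proves the second bullet $\hh\Delta(P)(\hh R(Q)\otimes\I)=\hh R(Q)\otimes Q$ directly from the \emph{strong left invariance} of $\hh\varphi$, i.e.\ the antipode identity $\hh S\bigl((\id\otimes\hh\varphi)(\hh\Delta(a^*)(\I\otimes b))\bigr)=(\id\otimes\hh\varphi)((\I\otimes a^*)\hh\Delta(b))$; this is what legitimately brings $\hh R$ into the computation. Slicing that identity with a norm-one $\nu\in\Linf(\hh\GG)_*$ satisfying $\nu(\hh R(Q))=1$ exhibits $Q=(\nu\otimes\id)\bigl(\hh\Delta(P)(\hh R(Q)\otimes\I)\bigr)$ as an element of the coideal $\hh\GG$-generated by $P$, and then \eqref{relPN4} gives $\hh\Delta(Q)(\I\otimes P)=Q\otimes P$. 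The minimality/centrality of $Q$ in $\widetilde{\sN_P}$ and the modular invariances then follow as you outline. For the second phase the paper is again more concrete than your partial-isometry sketch: the slice formula just obtained writes $Q=(\mu\otimes\id)(\hh\Delta(P))$ with $\mu=\hh R(Q)\nu$, $\|\mu\|\le 1$; using $P=(\id\otimes\tilde\omega)(\wW)$ for the idempotent state $\tilde\omega$ one unfolds $\hh\Delta(P)$ via $\wW_{23}\wW_{13}$ and reads off $Q=(\id\otimes\omega)(\wW)$ with $\omega=a\cdot\tilde\omega$, $a=(\mu\otimes\id)(\wW)\in\C_0^u(\GG)$, $\|a\|\le 1$. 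No partial isometry has to be extracted at the universal level and the norm bound is automatic.
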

\begin{proof}
Using Remark \ref{remarkleft} we see that $P\neq 0$ is a group-like projection preserved by $\hh\tau$. 
The strong left invariance of $\hh\varphi$ has the form 
\[\hh S\left((\id\otimes\hh\varphi)(\hh\Delta(a^*)(\I\otimes b)) \right)= (\id\otimes\hh\varphi)((\I\otimes a^*)\hh\Delta(b))\] where $a,b\in\Linf(\hh\GG)$ satisfy $\hh\varphi(a^*a),\hh\varphi(b^*b)<\infty$. 
Thus we have 
\begin{align*}
    \hh S\left((\id\otimes\hh\varphi)(\hh\Delta(a^*)\hh\Delta(P)(\hh R(Q)\otimes b)) \right)&=Q\hh S\left((\id\otimes\hh\varphi)(\hh\Delta(a^*P)(\I\otimes b)) \right)\\&=
    (\id\otimes\hh\varphi)((\I\otimes a^*)(Q\otimes P)\hh\Delta(b)) \\&=
    (\id\otimes\hh\varphi)((Q\otimes a^*)\hh\Delta(Qb)) \\&= \hh S\left((\id\otimes\hh\varphi)(\hh\Delta(a^*)(\hh R(Q)\otimes Q  b)) \right)
\end{align*} and we get 
\begin{equation}\label{identyx}\hh\Delta(P)(\hh R(Q)\otimes \I) =\hh R(Q)\otimes Q.\end{equation}
Let $\sN$ be the coideal $\hh\GG$-generated by $P$ and let $\nu\in\Linf(\hh\GG)_*$ be such that, $\|\nu\|=1$ and $\nu(\hh R(Q)) = 1$. Then, using Equation \eqref{identyx}  we get  \begin{equation}\label{xslice}Q=(\nu\otimes\id)(\hh\Delta(P)(\hh R(Q)\otimes \I))\in\sN.\end{equation} Since   $\sN = \{a\in\Linf(\hh\GG):\hh\Delta(a)(\I\otimes P) = a\otimes P\}$ (see \ref{relPN4}) we have $\hh\Delta(Q)(\I\otimes P) = Q\otimes P$. 

Applying $\hh R\otimes \hh R$ to Equation \eqref{identyx} we get $\hh\Delta(P)(\I\otimes Q) = \hh R(Q)\otimes Q$. In particular $Q\in Z(\sN)$ and it is a minimal projection in $\sN$ (c.f. Equation \eqref{relPN3}).  Using  Lemma \ref{invar} and Corollary \ref{corinvsig}   we conclude that $\sigma^{\hh\psi}_t(Q) = Q=\sigma^{\hh\varphi}_t(Q)$ for all $t\in\RR$. 

Using \eqref{xslice} we see that there exists a functional $\mu\in\Linf(\hh\GG)_*$, $\|\mu\| \leq 1$ such that $(\mu\otimes\id)(\hh\Delta(P))=Q$ (put $\mu = \hh R(Q)\nu$). Let $\tilde\omega\in\C_0^u(\GG)^*$ be the idempotent state such that $(\id\otimes \tilde\omega)(\wW) = P$. Then 
\[Q = (\mu\otimes\id)((\id\otimes\id\otimes\tilde\omega)(\wW_{23}\wW_{13})) = (\id\otimes(a\cdot\tilde\omega))(\wW)\] where $a = (\mu\otimes\id)(\wW)\in\C_0^u(\GG)$,  $\|a\|\leq 1$. Putting $\omega = \tilde\omega a$ we see that $\|\omega\|\leq 1$ and $(\id\otimes\omega)(\wW)= Q$. Since $Q$ is a projection $\omega$ must be an idempotent functional. 
\end{proof}

\begin{remark}\label{compar}
Left shifts of group-like projections considered in \cite{UnPr1} were assumed to satisfy $\hh\varphi(Q)=\hh\varphi(P)<\infty$ and they were defined by two conditions:\begin{equation}\label{unprdef}\hh\Delta(Q)(\I\otimes P) = Q\otimes P,\quad \hh\Delta(P)(\I\otimes Q) = \hh R(Q)\otimes Q.\end{equation} 
The $\hh\tau$ - invariance of $Q$  then follows. 

Note that assuming in Theorem \ref{thmcis} that $Q$ satisfies $\hh\Delta(Q)(Q\otimes\I) = Q\otimes P$ and $\hh\tau_t(Q) =Q$  for all $t\in\mathbb{R}$,  we get  \eqref{unprdef} and $\hh\varphi(P) = \hh\varphi(Q)$  as a consequence. Conversely, assuming $\hh\Delta(P)(\I\otimes Q) = \hh R(Q)\otimes Q$ and $\hh\tau_t(Q) = Q$ and using strong left invariance of $\hh\varphi$ as in the computation preceding \eqref{identyx} we can prove that $\hh\Delta(Q)(Q\otimes\I) = Q\otimes P$. 
\end{remark}
\begin{corollary}\label{corimport}
If $Q$ satisfies the assumptions of Theorem \ref{thmcis} then 
\[\overline{\{(\mu\otimes\id)(\hh\Delta(P)):\mu\in\Linf(\hh\GG)_*\}}^{\sigma-\textrm{weak}} = \overline{\{(\mu\otimes\id)(\hh\Delta(Q)):\mu\in\Linf(\hh\GG)_*\}}^{\sigma-\textrm{weak}}.\]
 Moreover there exists a group-like projection $\tilde{P}\in\Linf(\hh\GG)$ such that $\hh\Delta(Q) (\I\otimes Q) = \tilde{P}\otimes Q$.  
\end{corollary}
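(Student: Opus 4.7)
The plan is to leverage Theorem \ref{thmcis}, which furnishes a contractive idempotent functional $\omega\in\C_0^u(\GG)^*$ with $Q=(\id\otimes\omega)(\wW)=P_\omega$, together with the identities established in Theorem \ref{thmbis} applied to $\omega$. The second claim comes for free: Theorem \ref{thmbis} gives $\hh\Delta(P_\omega)(\I\otimes P_\omega)=P_{|\omega|_l}\otimes P_\omega$, so $\tilde P=P_{|\omega|_l}$ does the job. This $\tilde P$ is a group-like projection because $|\omega|_l$ is an idempotent state, and idempotent states correspond to $\hh\tau$-invariant group-like projections via the correspondence recalled after Lemma \ref{eq00}.

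For the first claim, denote by $A$ and $B$ the two $\sigma$-weakly closed subspaces on the left- and right-hand sides respectively. The inclusion $B\subseteq A$ is easy: the proof of Theorem \ref{thmcis} already established $Q\in\widetilde{\sN_P}=A$, and since $A$ is a coideal one has $\hh\Delta(Q)\in\Linf(\hh\GG)\vtens A$, so every slice $(\mu\otimes\id)(\hh\Delta(Q))$ lies in $A$; closing in the $\sigma$-weak topology gives $B\subseteq A$.

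The reverse inclusion $A\subseteq B$ is the heart of the argument. First, slice the defining identity $\hh\Delta(Q)(Q\otimes\I)=Q\otimes P$ on the first leg by a $\nu\in\Linf(\hh\GG)_*$ with $\nu(Q)=1$; a direct calculation rewrites the right-hand side as $P$ and the left-hand side as $(Q\nu\otimes\id)(\hh\Delta(Q))\in B$, using the predual bimodule action. To upgrade this from $P$ itself to every slice of $\hh\Delta(P)$, I would invoke coassociativity $(\hh\Delta\otimes\id)\circ\hh\Delta=(\id\otimes\hh\Delta)\circ\hh\Delta$, which yields, for every $\omega\in\Linf(\hh\GG)_*$, the identity $(\omega\otimes\id)(\hh\Delta(P))=(\eta\otimes\id)(\hh\Delta(Q))$ with $\eta=(Q\nu\otimes\omega)\circ\hh\Delta\in\Linf(\hh\GG)_*$. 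Hence every slice of $\hh\Delta(P)$ lies in $B$, and taking $\sigma$-weak closure completes $A\subseteq B$. The only delicate point in this plan is checking that the composite functional $\eta$ is normal, which is immediate from the normality of $\hh\Delta$ together with the normality of $Q\nu$ and $\omega$; no serious obstacle is anticipated.
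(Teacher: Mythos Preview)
Your proof is correct and follows essentially the same route as the paper. For the second claim the paper likewise invokes Theorems \ref{thmcis} and \ref{thmbis}; for the equality of the two slice-closures the paper also shows $Q\in\sM:=A$ to get $B\subseteq A$, then obtains $P=(Q\nu\otimes\id)(\hh\Delta(Q))\in B$ from $\hh\Delta(Q)(Q\otimes\I)=Q\otimes P$ and finishes by citing \eqref{relPN3}---which amounts precisely to your coassociativity computation showing that $B$ is stable under left slices of $\hh\Delta$, hence contains all of $A$.
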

\begin{proof}The existence of $\tilde{P}$ follows  from  Theorem  \ref{thmcis} and Theorem \ref{thmbis}.
Denote 
\[\sM =\overline{ \{(\mu\otimes\id)(\hh\Delta(P)):\mu\in\Linf(\hh\GG)_*\}}^{\sigma-\textrm{weak}}.\] 
Clearly, since $Q\in \sM$ we have  $\overline{\{(\mu\otimes\id)(\hh\Delta(Q)):\mu\in\Linf(\hh\GG)_*\}}^{\sigma-\textrm{weak}}\subset\sM$. Taking $\nu\in\Linf(\hh\GG)_*$   such that $\nu(Q) = 1$ we get  \[P = (Q\nu\otimes \id)(\hh\Delta(Q))\in\overline{\{(\mu\otimes\id)(\hh\Delta(Q)):\mu\in\Linf(\hh\GG)_*\}}^{\sigma-\textrm{weak}}\] and   using Equation \eqref{relPN3} conclude the converse containment  \[\sM \subset \overline{\{(\mu\otimes\id)(\hh\Delta(Q)):\mu\in\Linf(\hh\GG)_*\}}^{\sigma-\textrm{weak}}.\]
\end{proof}
\section{Coideals admitting an atom}\label{Sec5}
\begin{theorem} Let $\sN\subset \Linf(\GG)$ be a coideal   and assume that its codual coideal $\tilde{\sN}\subset \Linf(\hh\GG)$ admits a non-zero  minimal central projection $ Q\in Z(\widetilde\sN)$  such that $\hh\tau_t(Q) = Q$.  
Then there exists a unique idempotent state $\omega\in\C_0^u(\GG)^*$ such that $Q$ is a left shift of $P_\omega$. Moreover $\sN = \sN_\omega$ and we have $\widetilde\sN = \{(\mu\otimes\id)(\hh\Delta(Q)):\mu\in\Linf(\hh\GG)_*\}^{\sigma-\textrm{cls}}$. 
\end{theorem}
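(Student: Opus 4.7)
I will reduce the theorem to Theorem \ref{thmcis} by establishing the left-shift identity $\hh\Delta(Q)(Q\otimes\I) = Q\otimes P$ for some projection $P\in\Linf(\hh\GG)$. Once this is in hand, the hypothesis $\hh\tau_t(Q)=Q$ and Theorem \ref{thmcis} together produce a contractive idempotent functional $\mu\in\C^u_0(\GG)^*$ with $Q = (\id\otimes\mu)(\wW)$ and make $P$ a $\hh\tau$-invariant group-like projection. Setting $\omega := |\mu|_r$, which is an idempotent state by \cite[Theorem~2.4]{NSSS}, Theorem \ref{thmbis} identifies $P = P_\omega$, so $Q$ becomes a left shift of the group-like projection $P_\omega$ coming from an idempotent state, as required.

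\textbf{The left-shift step (main obstacle).} Because $\widetilde\sN$ is a coideal, $\hh\Delta(Q)\in\Linf(\hh\GG)\vtens\widetilde\sN$; and since $Q\in Z(\widetilde\sN)$, the projection $\I\otimes Q$ is central in the von Neumann algebra $\Linf(\hh\GG)\vtens\widetilde\sN$ and hence commutes with $\hh\Delta(Q)$. Thus $Y := \hh\Delta(Q)(\I\otimes Q) = (\I\otimes Q)\hh\Delta(Q)$ is a self-adjoint projection lying in $\Linf(\hh\GG)\vtens Q\widetilde\sN$, and $Q\widetilde\sN$ is a factor by minimality of $Q$. To pass from $Y$ to the desired equality $\hh\Delta(Q)(Q\otimes\I) = Q\otimes P$, I propose combining coassociativity $(\hh\Delta\otimes\id)\hh\Delta(Q) = (\id\otimes\hh\Delta)\hh\Delta(Q)$ (multiplied by $Q\otimes\I\otimes\I$ or $Q\otimes\I\otimes Q$) with the factor structure of $Q\widetilde\sN$: minimality should squeeze the first-leg slices $(Q\nu\otimes\id)(\hh\Delta(Q))$ into scalar multiples of a single projection $P$, with $\hh\tau$-invariance of $Q$ used to eliminate any measurable ambiguity in the resulting $P$. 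This is the most delicate point of the argument.

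\textbf{Coideal identification and uniqueness.} Granting the left-shift identity, Corollary \ref{corimport} together with \eqref{relPN3} give
\[\widetilde{\sN_\omega} = \overline{\{(\nu\otimes\id)(\hh\Delta(P_\omega)):\nu\in\Linf(\hh\GG)_*\}}^{\sigma-\text{weak}} = \overline{\{(\nu\otimes\id)(\hh\Delta(Q)):\nu\in\Linf(\hh\GG)_*\}}^{\sigma-\text{weak}},\]
and since $\hh\Delta(Q)\in\Linf(\hh\GG)\vtens\widetilde\sN$ the right-hand set is contained in $\widetilde\sN$, so $\widetilde{\sN_\omega}\subset\widetilde\sN$. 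For the reverse containment $\widetilde\sN\subset\widetilde{\sN_\omega}$ I invoke the characterization \eqref{relPN4}, verifying $\hh\Delta(y)(\I\otimes P_\omega)=y\otimes P_\omega$ for each $y\in\widetilde\sN$: Theorem \ref{thmcis} provides this directly for $y=Q$, and I extend to all of $\widetilde\sN$ via the Podle\'s condition \eqref{Podcond} applied to $\widetilde\sN$, exploiting that $Q$ is minimal central to propagate the relation across all slices. Uniqueness of $\omega$ is immediate: if another idempotent state $\omega'$ satisfied $\hh\Delta(Q)(Q\otimes\I) = Q\otimes P_{\omega'}$, slicing on the first leg by any $\nu\in\Linf(\hh\GG)_*$ with $\nu(Q)=1$ forces $P_\omega = P_{\omega'}$, hence $\omega=\omega'$ by the bijection between idempotent states and $\hh\tau$-invariant group-like projections from \cite{FallKasp}.
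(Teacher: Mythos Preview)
Your overall plan --- reduce to Theorem \ref{thmcis} by first producing the left-shift identity $\hh\Delta(Q)(Q\otimes\I)=Q\otimes P$, then invoke Theorem \ref{thmbis} and Corollary \ref{corimport} --- matches the paper's endgame. The problem is the left-shift step itself, which you correctly flag as ``the most delicate point'' but do not actually carry out.

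Minimality and centrality of $Q$ in $\widetilde\sN$ control the \emph{second} leg of $\hh\Delta$, since $\hh\Delta(\widetilde\sN)\subset\Linf(\hh\GG)\vtens\widetilde\sN$. What comes for free is therefore $\hh\Delta(Q)(\I\otimes Q)=P_0\otimes Q$ for some $P_0$ (and Lemma \ref{basic_lem} makes $P_0$ group-like). Your proposal to obtain $\hh\Delta(Q)(Q\otimes\I)=Q\otimes P$ directly by ``squeezing the first-leg slices $(Q\nu\otimes\id)(\hh\Delta(Q))$ into scalar multiples of a single projection'' fails: the first leg of $\hh\Delta(Q)$ lives in all of $\Linf(\hh\GG)$, where $Q$ enjoys no minimality, so nothing forces those slices to be proportional. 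Feeding coassociativity multiplied by $Q\otimes\I\otimes Q$ into this only recovers the $\hh\GG$-equivariance of the assignment $a\mapsto\pi(a)$ defined by $\hh\Delta(a)(\I\otimes Q)=\pi(a)\otimes Q$; it does not produce a first-leg identity. The paper's route is indirect: from $\hh\Delta(Q)(\I\otimes Q)=P_0\otimes Q$ one applies $\hh R\otimes\hh R$ (using $\hh R(P_0)=P_0$) to get $\hh\Delta(\hh R(Q))(\hh R(Q)\otimes\I)=\hh R(Q)\otimes P_0$, so that $\hh R(Q)$ is a $\hh\tau$-invariant left shift; then Theorem \ref{thmcis} and Corollary \ref{corimport} applied to $\hh R(Q)$ yield a $\hh\tau$-invariant group-like $P$ with $\hh\Delta(\hh R(Q))(\I\otimes\hh R(Q))=P\otimes\hh R(Q)$; a second application of $\hh R\otimes\hh R$ then gives $\hh\Delta(Q)(Q\otimes\I)=Q\otimes P$.

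There is a second gap in your reverse inclusion $\widetilde\sN\subset\widetilde{\sN_\omega}$. Invoking Podle\'s to ``propagate'' the relation $\hh\Delta(y)(\I\otimes P_\omega)=y\otimes P_\omega$ from $y=Q$ to all $y\in\widetilde\sN$ does not work as stated: Podle\'s expresses elements of $\widetilde\sN$ as slices of $\hh\Delta(y)$ for arbitrary $y\in\widetilde\sN$, not as slices of $\hh\Delta(Q)$ alone. What is really needed is that $\widetilde\sN$ is $\hh\GG$-generated by $Q$, and this is nontrivial. The paper obtains it by showing that the map $\pi:\widetilde\sN\to\sM:=\pi(\widetilde\sN)$ above is a $\hh\GG$-equivariant normal $*$-isomorphism (injectivity uses ergodicity of the $\hh\GG$-action on $\widetilde\sN$), identifying $\sM$ with the coideal $\hh\GG$-generated by $P_0=\pi(Q)$ via \eqref{relPN3}, and pulling this back through $\pi^{-1}$.
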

\begin{proof} 
 Let $a\in\widetilde\sN$. By minimality and centrality of $Q\in\widetilde\sN$ there exists $\breve{a}\in\Linf(\hh\GG)$ such that 
    \[\hh\Delta(a)(\I\otimes Q) = \breve{a}\otimes Q.\]   Note that  $\sM:=\{\breve{a}:a\in\widetilde\sN\} \subset \Linf(\hh\GG)$ forms a coideal and $\pi:\widetilde\sN\ni a\mapsto \breve a\in\sM$ is $\hh\GG$ - equivariant normal $*$-homomorphism which commutes with $\hh\tau$. E.g. the $\hh\GG$ - equivariance follows from the computation 
    \begin{align*}(\hh\Delta\otimes \id)(\pi(a)\otimes Q)  &= (\hh\Delta\otimes \id)(\hh\Delta(a)(\I\otimes Q)\\& =(\id\otimes \hh\Delta)(\hh\Delta(a))(\I\otimes \I\otimes Q) \\& = (\id\otimes\pi)(\hh\Delta(a))\otimes Q. \end{align*} Similarly we check that $\pi\circ\hh\tau_t = \hh\tau_t\circ\pi$. 
    By the ergodicity of the action of $\hh\GG$ on $\widetilde\sN$, $\pi$ is an isomorphism. Indeed otherwise there exists  $q\in Z(\widetilde\sN)$ such that $\ker\pi = \widetilde\sN q$, $q\neq 0$. Using the covariance of $\pi$ we get  $\hh\Delta(q) \leq  \I\otimes q$. Thus using \cite[Lemma 6.4]{KV} we get $q=\I$ which  contradicts  the unitality of $\pi$.  
    
 Let us denote $P_0 = \pi(Q)$.   Using Lemma \ref{basic_lem} we see that  $P_0$ is a group-like projection. Clearly,  $P_0$ is  minimal, central $\hh\tau_t$-invariant in $\sM$  (since the same holds for $Q\in\widetilde\sN$). 
 Let $\omega_0\in\C_0^u(\GG)^*$ be the idempotent state such that $P_0 = (\id\otimes\omega_0)(\wW)$ and $\sN_{\omega_0}\subset\Linf(\GG)$ the coideal assigned to $\omega_0$. 
 By the minimality and centrality of $P_0\in \sM$, for all $b\in\sM$ there exists $c\in\Linf(\hh\GG)$ such that  $\hh\Delta(b)(\I\otimes P_0) = c\otimes P_0$, i.e.
   \begin{equation}\label{auxthm}(\I\otimes b)\ww(P_0\otimes\I) = \ww(P_0\otimes c).\end{equation} Slicing Equation \eqref{auxthm} with $(\nu\otimes\id)$ where $\nu\in\Linf(\hh\GG)_*$ and using \eqref{relPN1} we see that  $bx = xc$ for all $x\in\sN_{\omega_0}$. Putting $x= \I$ we get $b = c$ and thus $\sM\subset \widetilde{\sN_{\omega_0}}$. Since $P_0\in\sM$ and $\widetilde{\sN_{\omega_0}}$ is $\hh\GG$-generated by $P_0$ (see Equation  \eqref{relPN3}) we also have the converse inclusion. In particular $\sM$ is $\hh\GG$-generated by $P_0$ and since $\pi$ identifies  $\widetilde\sN$ and $\sM$, $\widetilde\sN$\ is $\hh\GG$-generated by $Q$. 
 
The equality  $\pi(Q) = P_0$ reads $\hh\Delta(Q)(\I\otimes Q)= P_0 \otimes Q$  and together with  $\hh R(P_0) = P_0$ implies  $\hh\Delta(\hh R(Q))(\hh R(Q)\otimes\I) = \hh R(Q)\otimes P_0$.  Using Corollary \ref{corimport}   we see that there exists $\hh\tau$-invariant group-like projection  $P$ such that $\hh\Delta(\hh R(Q))(\I\otimes \hh R(Q)) =  P\otimes \hh R(Q)$. In particular we have $\Delta(Q)(Q\otimes \I) =Q\otimes  P$  and, again using Corollary \ref{corimport}, we see that $Q$ and $P$  $\hh\GG$-generate the same coideal, i.e. $\widetilde\sN = \{(\mu\otimes \id)(\hh\Delta(P):\mu\in\Linf(\hh\GG)_*\}^{\sigma-\textrm{weak}}$. Denoting by $\omega\in\C_0^u(\GG)^*$   the idempotent state corresponding to $P$ we get $\sN = \sN_\omega$ and we are done.  
\end{proof}
\section{Ternary rings of operators and contractive idempotent functionals}\label{sec6}
 
\begin{definition}
 Let $\sM$ be a von Neumann algebra, $\sX\subset\sM$   a linear subspace closed in the $\sigma-\textrm{weak}$ topology. We say that $\sX$ is a ternary ring of operators (TRO) if \[ab^*c\in\sX \textrm{ whenever } a,b,c\in\sX.\] Let $\psi$ be an n.s.f. weight on $\sM$ and $\eta$ the GNS map assigned to $\psi$. We say that $\sX$ is $\psi$-integrable (or simply integrable) if $\sX\cap D(\eta)$ is a $\sigma-\textrm{weakly}$ dense subspace of $\sX$. The Hilbert space completion of $\eta(\sX\cap D(\eta))$ will be denoted by $\Ltwo(\sX)$.  
\end{definition}

If  $\sX\subset \sM$ is a TRO then $\sX^*$ is a TRO. Note that $\{a^*b:a,b\in\sX\}^{\sigma-\textrm{cls}}\subset \sM$ is a subalgebra of $\sM$  which we shall denote  by $\langle \sX^*\sX\rangle$.  If   $\I_\sM\in \langle \sX^*\sX\rangle\cap\langle \sX\sX^*\rangle$ then we say that $\sX$ is a  non-degenerate TRO in $\sM$ (see \cite{SalSkTro} for the discussion of non-degeneracy condition in the contetxt of TRO's).  
\begin{example}\label{exx}
Let $\GG$ be a locally compact quantum group and $\omega\in\C_0^u(\GG)^*$ a contractive idempotent functional. Let $\sX_\omega = \{x\in\Linf(\GG):E_\omega( x) = x\}$.  Using the method of the proof of \cite[Theorem]{NSSS} we see that $\sX_\omega$ is TRO.   Since $\omega\staru x\in D(\eta)$ for all $x\in D(\eta)$ we conclude that $\sX_\omega$ is integrable.    Let us prove that $\sX_\omega$ is non-degenerate. For all $a,b\in\Linf(\GG)$ we have  (c.f. Equation \ref{eqser2})
\begin{equation}\label{eq51}E_\omega(a)^*E_\omega(b) =  E_{|\omega_l|}(E_\omega(a)^*b)).\end{equation} Using Equation \eqref{poddel} in the second equality below  we get  \[\{E_\omega(a)^*b  :a,b\in\C_0(\GG)\}^{\sigma-\textrm{cls}}   = \{(\id\otimes\omega^*)(\Delta^{r,u}(a^*)(b\otimes\I)):a,b\in\C_0(\GG)\}^{\sigma-\textrm{cls}} = \Linf(\GG)\]  and using Equation \eqref{eq51}  we conclude that $\{E_\omega(a)^*E_\omega(b):a,b\}^{\sigma-\textrm{cls}}$  is equal $\sN_{|\omega|_l}$. Similarly we check that $\langle\sX_\omega\sX_\omega^*\rangle = \sN_{|\omega|_r}$  and we see that $\sX_\omega$ is  nondegenerate. It is easy to check that $\sX_\omega$ is preserved  by $\GG$. Using $\tau^u$ - invariance of $\omega$ we see that $\tau_t\circ E_\omega= E_\omega\circ\tau_t$, and thus $\sX_\omega$ is preserved by $\tau$. 
\end{example} 
Let  $\sX\subset\sM$ be a TRO.  The linking von Neumann algebra $\sA_\sX\subset  \sM\vtens\M_2(\CC)$ is defined by \[\sA_\sX = \begin{bmatrix}\langle\sX\sX^*\rangle&\sX\\\sX^*&\langle\sX^*\sX\rangle\end{bmatrix}.\] Now suppose that $\sM = \Linf(\GG)$ and $\sX$ is  non-degenerate TRO which is preserved by $\GG$, i.e. $\Delta(\sX)\subset\Linf(\GG)\vtens\sX$.  Then $(\Delta\otimes\id):\Linf(\GG)\vtens\M_2(\CC)\to\Linf(\GG)\vtens\Linf(\GG)\vtens\M_2(\CC)$ restricts  to an action  of $\GG$ on $\sA_\sX\subset \Linf(\GG)\vtens\M_2(\CC)$. In particular  the   weak Podle\'s condition holds $\sA_\sX = \{(\omega\otimes\id)(\Delta\otimes\id)(a)):\omega\in\Linf(\GG)_*,a\in\sA_\sX\}^{\sigma-\textrm{cls}}$ (see \cite[Corollary 2.7]{gpext}). This implies the equality  \eqref{weakpodeq} in the next proposition.  
\begin{proposition}\label{prop_pod}
Let $\sX\subset \Linf(\GG)$ be a non-degenerate $\GG$-invariant TRO. Then we have \begin{equation}\label{weakpodeq}\sX = \{(\mu\otimes\id)(\Delta(a)):a\in\sX,\mu\in\Linf(\GG)_*\}^{\sigma-\textrm{cls}}.\end{equation}
If $\sX$ is preserved by $\tau$ then it is preserved by $\sigma^\psi$. If in addition $\sX$ is integrable then $\sX^*$ is integrable.  
\end{proposition}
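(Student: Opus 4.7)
The first equality is exactly the weak Podle\'s condition \cite[Corollary 2.7]{gpext} for the linking von Neumann algebra $\sA_\sX$, read off the $(1,2)$-corner; this is already spelled out in the paragraph preceding the proposition, so I would just invoke it. For the implication $\tau$-invariance $\Rightarrow$ $\sigma^\psi$-invariance I follow the template of Lemma \ref{leminvt}: combining the Kustermans--Vaes identity $\Delta\circ\tau_t=(\sigma^\varphi_t\otimes\sigma^\psi_{-t})\circ\Delta$ with the representation \eqref{weakpodeq}, every element of $\sX$ is a $\sigma$-weak limit of $b*\mu$ with $b\in\sX$ and $\mu\in\Linf(\GG)_*$, and a short slicing computation gives
\[\sigma^\psi_t(b*\mu)=\tau_{-t}(b)*(\mu\circ\sigma^\varphi_t).\]
Since $\tau_{-t}(b)\in\sX$ by hypothesis and $\mu\circ\sigma^\varphi_t\in\Linf(\GG)_*$, the right hand side is again in $\sX$ by \eqref{weakpodeq}, so $\sigma^\psi_t(\sX)\subseteq\sX$; reversing $t$ yields equality.

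For the last claim the plan is to produce, for each $x\in\sX\cap D(\eta)$, a $\sigma$-weakly approximating sequence sitting inside $\sX\cap D(\eta)\cap D(\eta)^*$; taking adjoints then lands these approximants into $\sX^*\cap D(\eta)$. The natural candidate is the Gaussian smoothing along the modular group,
\[x_n=\frac{n}{\sqrt{\pi}}\int_{\RR}e^{-n^2 t^2}\,\sigma^\psi_t(x)\,dt,\]
interpreted as a $\sigma$-weak integral. By the $\sigma^\psi$-invariance of $\sX$ just established one has $x_n\in\sX$. Moreover, by a standard modular-theoretic calculation, the spectral-calculus identity $\eta(x_n)=e^{-(\log\nabla)^2/(4n^2)}\eta(x)$ places $\eta(x_n)$ in the domain of every power $\nabla^z$, and an analogous calculation together with $J\nabla^s J=\nabla^{-s}$ gives the bound $\psi(x_n x_n^*)\leq C_n\,\psi(x^*x)<\infty$ with $C_n$ depending only on $n$. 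Hence $x_n\in D(\eta)\cap D(\eta)^*$ and $\sigma^\psi_z(x_n)\in D(\eta)\cap D(\eta)^*$ for every $z\in\CC$, i.e.\ $x_n\in\mathcal{T}_\psi$; of course $x_n\to x$ $\sigma$-weakly.

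Taking adjoints then yields $x_n^*\in\sX^*\cap\mathcal{T}_\psi\subseteq\sX^*\cap D(\eta)$ with $x_n^*\to x^*$ $\sigma$-weakly. Since integrability of $\sX$ provides $\sigma$-weak density of $\{x^*:x\in\sX\cap D(\eta)\}$ in $\sX^*$, and each such $x^*$ is in turn approximated from within $\sX^*\cap D(\eta)$ by the $x_n^*$, we conclude that $\sX^*\cap D(\eta)$ is $\sigma$-weakly dense in $\sX^*$, which is the integrability of $\sX^*$. The main subtlety of the argument is the passage $x_n\in D(\eta)\Longrightarrow x_n\in D(\eta)^*$: starting from $x$ only in $D(\eta)$ the adjoint $x^*$ need not lie in $D(\eta)$ at all, and what makes $x_n$ self-adjoint-dually integrable is precisely the Gaussian damping $e^{-(\log\nabla)^2/(4n^2)}$, which dominates the unbounded factor $\nabla^{1/2}$ appearing in the polar decomposition $S=J\nabla^{1/2}$ of the Tomita operator; a weaker mollifier would not suffice.
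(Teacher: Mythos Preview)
Your treatment of \eqref{weakpodeq} and of the $\sigma^\psi$-invariance matches the paper's proof exactly. For the final claim (integrability of $\sX^*$), your argument via Gaussian mollification is correct, but the paper takes a shorter route: rather than building elements of $\mathcal{T}_\psi$, it simply observes that for $x\in\sX\cap D(\eta)\cap D(\sigma^\psi_{i/2})$ the KMS identity gives
\[\psi\bigl(\sigma^\psi_{i/2}(x)\,\sigma^\psi_{i/2}(x)^*\bigr)=\psi(x^*x)<\infty,\]
so that $\sigma^\psi_{i/2}(x)\in\sX\cap D(\eta)^*$, and then density of $\bigl\{\sigma^\psi_{i/2}(x):x\in\sX\cap D(\sigma^\psi_{i/2})\bigr\}$ in $\sX$ finishes. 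Your Gaussian smoothing is precisely the standard device for \emph{producing} such analytic $x$, so the two arguments share the same modular-theoretic core (the passage $D(\eta)\to D(\eta)^*$ via $\nabla^{1/2}$); the paper just skips the explicit construction and invokes the KMS identity directly. Your version has the merit of making every approximation explicit and of landing in $\mathcal{T}_\psi$, which is more than is needed here, at the cost of some length.
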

\begin{proof}
 As explained in the paragraph preceding the formulation of Proposition \ref{prop_pod}, $\sX$ satisfies \eqref{weakpodeq}. Suppose that $\sX$ is preserved by $\tau$. Using the identity $(\sigma^\varphi_t\otimes\sigma^\psi_{-t})\circ\Delta = \Delta\circ\tau_t$ (see \cite[Theorem 6.8]{KV}) and Equation \eqref{weakpodeq} we see that $\sigma^\psi$ preserves $\sX$. Finally suppose that $\psi(x^*x)<\infty$ and $x\in D(\sigma^\psi_{\frac{i}{2}})$. Then $\sigma^\psi_{\frac{i}{2}}(x)\in\sX$ and $\psi(\sigma^\psi_{\frac{i}{2}}(x)\sigma^\psi_{\frac{i}{2}}(x)^*)=\psi(x^*x)<\infty$. Since the set $\left\{\sigma^\psi_{\frac{i}{2}}(x):x\in\sX\cap D(\sigma^\psi_{\frac{i}{2}})\right\}$ is dense in $ \sX$ we conclude that $\sX^*$ is integrable. 
\end{proof}
\begin{proposition}\label{propTro}
 Let $\GG$ be a locally compact quantum group and $\sX\subset \Linf(\GG)$ a non-degenerate, integrable TRO such that  $\Delta(\sX)\subset \Linf(\GG)\vtens\sX$. Let $Q\in\B(\Ltwo(\GG))$ be the orthogonal projection onto $\Ltwo(\sX)$ and $P\in\B(\Ltwo(\GG))$ the orthogonal projection onto $\{x^*\Ltwo(\sX):x\in\sX\}^{\textrm{cls}}$.  Then
 \begin{enumerate}
     \item $x^*Q = Px^*$ for all $x\in\sX$;
     \item $Q,P\in\Linf(\hh\GG)$  and $Q$ is a right shift of $P$.
 \end{enumerate}  
\end{proposition}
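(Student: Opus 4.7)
The plan is to handle (1) by a direct computation from the TRO axiom $\sX\sX^*\sX\subset\sX$, and then to derive (2) in two steps: first identify $P$ with the projection onto the integrable coideal $\sN:=\langle\sX^*\sX\rangle\subset\Linf(\GG)$ (placing $P\in\Linf(\hh\GG)$ via Proposition \ref{coidglp}), and then use (1) together with the coideal structure to extract both $Q\in\Linf(\hh\GG)$ and the right-shift relation from an operator identity on the multiplicative unitary.

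For (1), I would decompose $\Ltwo(\GG)=\Ltwo(\sX)\oplus\Ltwo(\sX)^\perp$ and verify $x^*Q=Px^*$ on each summand. On the dense family $\{\eta(y):y\in\sX\cap D(\eta)\}\subset\Ltwo(\sX)$, both sides give $\eta(x^*y)=x^*\eta(y)\in\sX^*\Ltwo(\sX)\subset\mathrm{ran}(P)$. On $\Ltwo(\sX)^\perp$ the left-hand side vanishes; the vanishing of $Px^*\xi$ reduces to $\langle\xi,\eta(xz^*w)\rangle=0$ for $z\in\sX$ and $w\in\sX\cap D(\eta)$, which follows from $xz^*w\in\sX\sX^*\sX\subset\sX$ via the TRO axiom, placing $\eta(xz^*w)$ in $\Ltwo(\sX)$.

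For $P\in\Linf(\hh\GG)$, I would verify that $\sN:=\langle\sX^*\sX\rangle$ is an integrable coideal: the property $\Delta(\sN)\subset\Linf(\GG)\vtens\sN$ comes from $\Delta(\sX)\subset\Linf(\GG)\vtens\sX$, and $\sX^*(\sX\cap D(\eta))$ sits $\sigma$-weakly densely in $\sN\cap D(\eta)$. Proposition \ref{coidglp} then gives $P_\sN\in\Linf(\hh\GG)$, and I would identify $P=P_\sN$: the inclusion $\mathrm{ran}(P)\subset\Ltwo(\sN)$ is immediate, while for the converse, non-degeneracy of $\sX$ produces a bounded net $u_\alpha\in\sX^*\sX$ with $u_\alpha\to\I$ $\sigma$-strongly. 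For $z\in\sN\cap D(\eta)$ one then has $\eta(u_\alpha z)=u_\alpha\eta(z)\to\eta(z)$ with each $\eta(u_\alpha z)\in\sX^*\Ltwo(\sX)\subset\mathrm{ran}(P)$, using the TRO-derived inclusion $\sX\cdot\sN\subset\sX$.

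To handle $Q\in\Linf(\hh\GG)$ and the shift relation, rewrite (1) as $Qx=xP$ for $x\in\sX$ and combine with $\Delta(\sX)\subset\Linf(\GG)\vtens\sX$ to obtain $(\I\otimes Q)\Delta(x)=\Delta(x)(\I\otimes P)$. Slicing against normal functionals and applying the GNS identity $(\id\otimes\omega)(\ww)\eta(x)=\eta(\omega*x)$ on the dense subspace $\eta(\sX\cap D(\eta))\subset\mathrm{ran}(Q)$ yields the operator identity $(\I\otimes Q)\ww(Q\otimes\I)=\ww(Q\otimes P)$, equivalently $\ww^*(\I\otimes Q)\ww(Q\otimes\I)=Q\otimes P$. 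The main obstacle is extracting $Q\in\Linf(\hh\GG)$ from this identity; my plan is to combine it with its adjoint and the pentagon equation for $\ww$, together with the already established $P\in\Linf(\hh\GG)$, so that $\ww^*(\I\otimes Q)\ww$ is forced into $\Linf(\hh\GG)\vtens\Linf(\hh\GG)$, which is equivalent to $Q\in\Linf(\hh\GG)$. With this in hand, the displayed identity reads $\hh\Delta(Q)(\I\otimes Q)=P\otimes Q$, placing $Q$ as a right shift of $P$.
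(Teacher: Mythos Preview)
Your treatment of (1) is essentially the paper's: split $\Ltwo(\GG)$ along $\Ltwo(\sX)$ and its complement, and use $\sX\sX^*\sX\subset\sX$ to see that $Px^*$ vanishes on $\Ltwo(\sX)^\perp$. Your identification $P=P_{\sN}$ for the integrable coideal $\sN=\langle\sX^*\sX\rangle$ is also correct, and Proposition~\ref{coidglp} then yields $P\in\Linf(\hh\GG)$.

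The gap is in your handling of $Q\in\Linf(\hh\GG)$. The equivalence you invoke---that $\ww^*(\I\otimes Q)\ww\in\Linf(\hh\GG)\vtens\Linf(\hh\GG)$ is equivalent to $Q\in\Linf(\hh\GG)$---is not a standard fact, and the pentagon-equation plan is too vague to evaluate. The identity $\ww^*(\I\otimes Q)\ww(Q\otimes\I)=Q\otimes P$ only determines $\ww^*(\I\otimes Q)\ww$ on the range of $Q\otimes\I$; nothing you have written forces its global behaviour, and conjugating an element of $\Linf(\hh\GG)\vtens\Linf(\hh\GG)$ back by $\ww$ only lands you in $\Linf(\hh\GG)\vtens\B(\Ltwo(\GG))$, which says nothing about $Q$.

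The paper avoids this by reversing the order: it proves $Q\in\Linf(\hh\GG)$ \emph{first} and directly, via the very commutant argument underlying Proposition~\ref{coidglp}---but applied to $\sX$ itself rather than to $\langle\sX^*\sX\rangle$. The operators $y$ with $y\eta(z)=\eta((\mu\otimes\id)\Delta(z))$ are dense in $\Linf(\hh\GG)'$ and preserve $\Ltwo(\sX)$ because $\Delta(\sX)\subset\Linf(\GG)\vtens\sX$; hence $yQ=QyQ$ for all such $y$, giving $Q\in\Linf(\hh\GG)$. Once this is known, your operator identity reads $\hh\Delta(Q)(\I\otimes Q)=P\otimes Q$, which simultaneously delivers $P\in\Linf(\hh\GG)$ (as a slice of $\hh\Delta(Q)(\I\otimes Q)\in\Linf(\hh\GG)\vtens\Linf(\hh\GG)$) and the right-shift relation---so the detour through $\sN$ becomes unnecessary. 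In short, you applied the right commutant argument to the wrong object.
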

\begin{proof}
 Ad(1): Let $x_1\in\sX$ and $x_2\in\sX\cap D(\eta)$. Then $x_1^*\eta(x_2)\in P\Ltwo(\GG)$. Thus \begin{equation}\label{tro1}x^*_1Q\eta(x_2) =x^*_1\eta(x_2)= Px_1^*\eta(x_2).\end{equation} Suppose now that $\xi\in\Ltwo(\GG)$ satisfies $Q\xi = 0$. The latter holds iff 
 \[(\eta(x_1x_2^*x_3)|\xi) = 0\] whenever $x_3\in\sX\cap D(\eta)$ and $x_1,x_2\in\sX$, which in turn holds iff \[(x_2^*\eta(x_3)|x_1^*\xi) = 0.\] The latter holds if and only if $Px_1^*\xi = 0 $. Since   $0 = x_1^*Q\xi$ we get $x_1^*Q\xi = Px_1^*\xi$, which  together with   \eqref{tro1} yields $x^*_1Q = Px_1^*$ for all $x_1\in\sX$. 
 
 \noindent Ad(2): Let $y\in\Linf(\hh\GG)'$ be an element for which there exist $\mu\in\Linf(\GG)_*$, such that  $y\eta(z) = \eta((\mu\otimes\id)(\Delta(z))$ for all  $z\in D(\eta)$ (the set of such   $y$'s is dense in $\Linf(\hh\GG)'$, see e.g. \cite[Equation 4.4]{coid_sub_st}). For all $x\in\sX\cap D(\eta)$ we have $y\eta(x) = \eta((\mu\otimes\id)(\Delta(x))\in\Ltwo(\sX)$, i.e. $yQ = QyQ$. Since the latter holds for all elements of a dense set  of $\Linf(\hh\GG)'$ we conclude that  $Q\in\Linf(\hh\GG)$.  
 
Let us show that $Q$ is a left shift of $P$. 
For  $x\in D(\eta)\cap\sX$ and $\omega\in\Linf(
\GG)_*$ we get 
\begin{align*}(\id\otimes\omega)((\I\otimes Q)\ww)\eta(x)&=\eta((\id\otimes \omega)((\I\otimes Q)\Delta(x)))\\&=\eta((\id\otimes \omega)(\Delta(x)(\I\otimes P)))\\&=(\id\otimes\omega)(\ww(\I\otimes P))\eta(x),\end{align*} where in the second equality we used $(\I\otimes Q)\Delta(x) =\Delta(x)(\I\otimes P)$ (see Ad(1)).  This computation shows that $(\I\otimes Q)\ww(Q\otimes\I) = \ww (Q\otimes P)$ which is equivalent with   $\hh\Delta(Q)(\I\otimes Q) = P\otimes Q $. In particular $P\in\Linf(\hh\GG)$ and $Q$ is a right shift of $P$. 
\end{proof} 
\begin{corollary}\label{corom}
 Suppose that $\sX$ satisfies  the conditions of  Proposition \ref{propTro} and $\sX$ is preserved by $\tau$ and let $Q$ be the orthogonal projection onto $\Ltwo(\sX)$. Then there exists an idempotent contractive functional $\omega\in\C_0^u(\GG)^*$ such that $Q = (\id\otimes\omega)(\wW)$. 
\end{corollary}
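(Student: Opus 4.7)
The plan is to reduce the statement to Theorem \ref{thmcis}. By Proposition \ref{propTro}, $Q\in\Linf(\hh\GG)$ and there is a group-like projection $P_1\in\Linf(\hh\GG)$ with $\hh\Delta(Q)(\I\otimes Q) = P_1\otimes Q$, i.e.\ $Q$ is a right shift of $P_1$. Theorem \ref{thmcis} requires instead a left shift that is $\hh\tau$-invariant, so the proof will proceed in three steps: (i) establish $\hh\tau_t(Q) = Q$ directly from the $\tau$-invariance of $\sX$, (ii) pass from the right shift $(Q,P_1)$ to a left shift by applying the unitary antipode $\hh R$, and (iii) convert the resulting contractive idempotent back via $R^u$.

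For step (i), recall that the scaling group $\tau$ is implemented on $\Ltwo(\GG)$ by a strongly continuous one-parameter group of unitaries $\{U_t\}_{t\in\RR}$ which simultaneously implements $\hh\tau$ on $\Linf(\hh\GG)$, and one has $U_t\eta(x) = \lambda_t\,\eta(\tau_t(x))$ for $x\in D(\eta)$ with a positive scalar $\lambda_t$ depending only on the scaling constant (see \cite{KVvN}). Since $\sX$ is preserved by $\tau$ and $\tau_t$ preserves $D(\eta)$ (because $\psi\circ\tau_t$ is proportional to $\psi$), $\tau_t$ preserves $\sX\cap D(\eta)$, and hence $U_t\Ltwo(\sX) = \Ltwo(\sX)$. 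Therefore $U_t$ commutes with $Q$ and $\hh\tau_t(Q) = U_tQU_t^* = Q$ for all $t\in\RR$.

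For step (ii), apply the anti-multiplicative map $\hh R\otimes\hh R$ to the adjoint identity $(\I\otimes Q)\hh\Delta(Q) = P_1\otimes Q$ and use $(\hh R\otimes\hh R)\circ\hh\Delta = \sigma\circ\hh\Delta\circ\hh R$. After flipping with $\sigma$ one obtains
\[
\hh\Delta(\hh R(Q))(\hh R(Q)\otimes\I) = \hh R(Q)\otimes \hh R(P_1),
\]
so $\hh R(Q)$ is a left shift of the group-like projection $\hh R(P_1)$ (cf.\ Remark \ref{remarkleft}). Since $\hh\tau$ commutes with $\hh R$, the element $\hh R(Q)$ is itself $\hh\tau$-invariant, and Theorem \ref{thmcis} delivers a contractive idempotent $\omega'\in\C_0^u(\GG)^*$ with $\hh R(Q) = (\id\otimes\omega')(\wW)$.

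For step (iii), apply $\hh R$ to both sides and use $(\hh R\otimes R^u)(\wW) = \wW$ — equivalently $(\hh R\otimes\id)(\wW) = (\id\otimes R^u)(\wW)$, since $R^u$ is an involution. This yields $Q = (\id\otimes\omega'\circ R^u)(\wW)$, and setting $\omega = \omega'\circ R^u$ one verifies $\omega*\omega = \omega$ from $\Delta^u\circ R^u = \sigma\circ(R^u\otimes R^u)\circ\Delta^u$ together with the symmetry of $\omega'\otimes\omega'$ under $\sigma$; since $\|\omega\|=\|\omega'\|\leq 1$, $\omega$ is the desired contractive idempotent functional. The step requiring most care is (i): the $\hh\tau$-invariance of $Q$ is not formal and depends on the precise way $\tau$-stability of a subspace $\sX\subset\Linf(\GG)$ translates into invariance of its GNS image under the implementing unitaries; once this link is in hand, the remainder of the argument is a clean application of already-established results of this paper.
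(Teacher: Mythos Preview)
Your argument is correct. The only substantive difference from the paper concerns step (i). The paper does not use the scaling operator directly; instead it first invokes Proposition \ref{prop_pod} to deduce from $\tau$-invariance of $\sX$ that $\sX$ is preserved by $\sigma^\psi$, hence $\nabla^{it}\Ltwo(\sX)=\Ltwo(\sX)$ for the modular operator $\nabla$ of $\psi$, and then cites \cite[Proposition 2.1]{KVvN} for the fact that $\nabla^{it}$ implements $\hh\tau_t$ on $\Linf(\hh\GG)$, giving $\hh\tau_t(Q)=Q$. Your route via the unitary $U_t$ (the operator usually denoted $P^{it}$) that simultaneously implements $\tau$ and $\hh\tau$ is equally valid and arguably more direct, as it avoids the detour through $\sigma^\psi$; the paper's route, on the other hand, recycles Proposition \ref{prop_pod} and thus keeps all the analytic input in one place. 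Your steps (ii)--(iii) merely make explicit what the paper hides behind a bare ``c.f.\ Theorem \ref{thmcis}'': Proposition \ref{propTro} yields a \emph{right} shift while Theorem \ref{thmcis} is stated for \emph{left} shifts, so the passage through $\hh R$ and $R^u$ is indeed needed, and you carry it out correctly.
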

 \begin{proof}
It is enough to check that $Q$  is preserved by the scaling group $\hh\tau$ (c.f. Theorem \ref{thmcis}). Using Proposition \ref{prop_pod}  we see that $\sX$ is preserved by $\sigma^\psi$. In particular $\nabla^{it}Q \nabla^{-it} = Q$  where  $T  =  J\nabla$ is  the Tomita-Takesaki operator assigned with $\psi$. Then  using  \cite[Proposition 2.1]{KVvN} we see that $\hh\tau_t(Q)  =  Q$ for all $t\in\RR$. 
 \end{proof}
\begin{theorem}\label{mainsec5}
 There is a one to one correspondence between contractive idempotent functionals and non-degenerate, integrable, $\tau$-invariant  TRO's   such that
  $\Delta(\sX)\subset\Linf(\GG)\vtens\sX$, where for a given contractive idempotent functional $\omega\in\C_0^u(\GG)^*$ the corresponding TRO is $\sX_\omega$. 
\end{theorem}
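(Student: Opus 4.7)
The assertion amounts to establishing that $\omega\mapsto\sX_\omega$ is a bijection between contractive idempotent functionals on $\C_0^u(\GG)$ and the class of TROs described in the statement. Well-definedness is Example \ref{exx}, so my plan is to verify injectivity and surjectivity.

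For injectivity, my first task is to identify the orthogonal projection $\Ltwo(\GG)\to\Ltwo(\sX_\omega)$ with $P_\omega=(\id\otimes\omega)(\wW)$. This follows by applying Proposition \ref{propTro} and Corollary \ref{corom} to the $\tau$-invariant TRO $\sX_\omega$: the $\Ltwo$-projection is of the form $P_{\omega'}$ for some contractive idempotent $\omega'$, and a direct computation using $P_{\omega'}\eta(x)=\eta(\omega'*x)$ together with $E_\omega(x)=x$ forces $\omega'=\omega$. Consequently $\sX_{\omega_1}=\sX_{\omega_2}$ gives $P_{\omega_1}=P_{\omega_2}$, and the bijection between contractive idempotent functionals and $\hh\tau$-invariant shifts of group-like projections obtained in Theorems \ref{thmbis}--\ref{thmcis} yields $\omega_1=\omega_2$.

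For surjectivity, I take $\sX$ as in the statement, let $Q$ be the projection onto $\Ltwo(\sX)$, and use Corollary \ref{corom} to produce a contractive idempotent $\omega\in\C_0^u(\GG)^*$ with $Q=P_\omega$; thus $\sX$ and $\sX_\omega$ share the same $\Ltwo$-closure. I then aim at $\sX=\sX_\omega$ via a pair of inclusions. For $\sX\subset\sX_\omega$, I would combine the intertwining identity $x^*Q=Px^*$ from Proposition \ref{propTro}(1) (with $P=P_{|\omega|_l}$ via Theorem \ref{thmbis}) with $Q\eta(y)=\eta(\omega*y)$ and $P\eta(y)=\eta(|\omega|_l*y)$ extracted from \eqref{multunit2}, and then invoke the Markov identities \eqref{eqser1}--\eqref{eqser2} from \cite{NSSS} to obtain $E_{|\omega|_l}(x^*y)=x^*E_\omega(y)$; letting $y$ range over a sufficiently large set (available by the non-degeneracy of $\sX$) then yields $E_\omega(x)=x$. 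For the reverse inclusion, both $\sX$ and $\sX_\omega$ are non-degenerate $\tau$-invariant integrable $\GG$-coinvariant TROs sharing the same $\Ltwo$-closure and, by the previous step combined with Example \ref{exx}, the same bimodule envelopes $\sN_{|\omega|_r}=\langle\sX\sX^*\rangle$ and $\sN_{|\omega|_l}=\langle\sX^*\sX\rangle$. A density argument inside the common linking von Neumann algebra of Section \ref{sec6}, exploiting the $\GG$-generation principle of Definition \ref{Ggener}, then forces $\sX_\omega\subset\sX$.

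The main obstacle is the surjectivity step. The crux is translating the single relation $Q=P_\omega$ into the two-sided fixed-point equation $E_\omega(x)=x$: this requires a simultaneous appeal to both absolute values $|\omega|_l,|\omega|_r$ and careful juggling of the Markov identities from \cite{NSSS} with the intertwining relation of Proposition \ref{propTro}(1). Ruling out a strict TRO refinement $\sX\subsetneq\sX_\omega$ is a secondary technical hurdle that should yield to the bimodule/linking-algebra argument sketched above.
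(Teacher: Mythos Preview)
Your approach diverges from the paper's, and the reverse inclusion $\sX_\omega\subset\sX$ is where it breaks down. The paper does not attempt a two-inclusion argument at all: instead it applies Takesaki's conditional expectation theorem to the linking von Neumann algebra $\sA_\sX\subset\Linf(\GG)\vtens\M_2(\CC)$. Since $\sA_\sX$ is $\sigma^\psi\otimes\id$-invariant (via Proposition \ref{prop_pod}), there is a $(\psi\otimes\textrm{Tr})$-preserving conditional expectation $E$ onto $\sA_\sX$. The paper then shows, using that the projection onto $\Ltwo(\sA_\sX)\subset\bigoplus_{i=1}^4\Ltwo(\GG)$ commutes with the diagonal action of $\Linf(\hh\GG)'$, that $E$ is a Schur map; its $(1,2)$-entry $E_{12}$ has Hilbert-space implementation $Q=P_\omega$, whence $E_{12}=E_\omega$ and $\sX=E_{12}(\Linf(\GG))=\sX_\omega$ in one stroke.

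In your outline, injectivity is fine, and $\sX\subset\sX_\omega$ is actually easier than you suggest: for $x\in\sX\cap D(\eta)$ one has $\eta(E_\omega(x))=P_\omega\eta(x)=Q\eta(x)=\eta(x)$, hence $E_\omega(x)=x$, and integrability plus normality of $E_\omega$ finish it. The genuine gap is $\sX_\omega\subset\sX$. Your ``density argument inside the common linking von Neumann algebra'' is not a proof: from $\sX\subset\sX_\omega$ you only get $\sA_\sX\subset\sA_{\sX_\omega}$ and $\langle\sX\sX^*\rangle\subset\sN_{|\omega|_r}$, not the claimed equalities of bimodule envelopes. More fundamentally, knowing $\Ltwo(\sX)=\Ltwo(\sX_\omega)$ does not by itself let you pull an arbitrary $y\in\sX_\omega$ back into $\sX$: $L^2$-approximation by elements of $\sX\cap D(\eta)$ does not upgrade to $\sigma$-weak convergence without additional structure. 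What is missing is precisely a mechanism that recovers $\sX$ from $Q$ alone---either a TRO analogue of \eqref{descript_N} (which would require reworking the proof of Proposition \ref{coidglp} for TROs, something you have not done) or the existence of a weight-preserving conditional expectation onto $\sX$, which is exactly what the paper obtains via Takesaki.
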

\begin{proof}
Let $\omega\in\C_0^u(\GG)$ be a contractive idempotent functional. Then $\sX_\omega$ described in Example \ref{exx} is a TRO satisfying the conditions of   Theorem \ref{mainsec5}. Conversely, let $\sX$ be a TRO satisfying the conditions of Theorem \ref{mainsec5}. Consider the linking von Neumann algebra $\sA_{\sX}\subset\Linf(\GG)\vtens\M_2(\CC)$. The weight $\psi\otimes\textrm{Tr}$ restricts to an n.s.f. weight on  $\sA_{\sX}$. Using Proposition \ref{prop_pod} we see that $\sA_\sX$  is preserved by $\sigma^\psi\otimes\id$.    Using \cite[Theorem IX.4.2]{Tak} we get a conditional expectation $E:\Linf(\GG)\vtens\M_2(\CC)\to \Linf(\GG)\vtens\M_2(\CC)$ onto $\sA_{\sX}$. 

Let us show that $E$ is  a  Shur map (we use here the terminology from the proof of \cite[Theorem 4.1]{NSSS} ), i.e. there exist  maps $E_{ij}:\Linf(\GG)\to\Linf(\GG)$ $i,j=1,2$, such that 
\begin{equation}\label{shur}E \begin{bmatrix} a&b\\
c&d\end{bmatrix}= \begin{bmatrix}E_{11}(a)&E_{12}(b)\\E_{21}(c)&E_{22}(d)\end{bmatrix}\end{equation}  for all $a,b,c,d\in\Linf(\GG)$.
 In order to do this let us note that   $\sA_{\sX}$ is preserved by   the      Shur  maps  of the form  \[\begin{bmatrix} L_{\nu_{11}}&L_{\nu_{12}}\\
L_{\nu_{21}}&L_{\nu_{22}}\end{bmatrix}:\Linf(\GG)\vtens\M_2(\CC)\to\Linf(\GG)\vtens\M_2(\CC)\] where $\nu_{ij}\in\Linf(\GG)_*$ for $i,j=1,2$ and 
\[\begin{bmatrix} L_{\nu_{11}}&L_{\nu_{12}}\\
L_{\nu_{21}}&L_{\nu_{22}}\end{bmatrix}\begin{bmatrix} a&b\\
c&d\end{bmatrix} =  \begin{bmatrix} a*\nu_{11}&b*\nu_{12}\\
c*\nu_{21}&d*\nu_{22}\end{bmatrix}\] for all $a,b,c,d\in\Linf(\GG)$. Using the identification $\Ltwo(\Linf(\GG)\vtens\M_2(\CC))  = \bigoplus_{i=1}^4\Ltwo(\GG)$ we see that the orthogonal projection   $P$  onto $\Ltwo(\sA_\sX)\subset\bigoplus_{i=1}^4\Ltwo(\GG)$  satisfies $yP   = PyP$  for all $y\in\bigoplus_{i=1}^4\Linf(\hh\GG)'$ acting diagonally on  $\bigoplus_{i=1}^4\Ltwo(\GG)$.    Since $\bigoplus_{i=1}^4\Linf(\hh\GG)'$ is a von Neumann  algebra we get  $yP = Py$ for  all $y\in\bigoplus_{i=1}^4\Linf(\hh\GG)'$ and thus  $P$  acts diagonally on 
 $\bigoplus_{i=1}^4\Ltwo(\GG)$. Since $P$ is a Hilbert space version of $E$ we  conclude that $E$ is a Shur map of the form \eqref{shur}. The Hilbert space version of $E_{12}$  is the orthogonal projection  $Q:\Ltwo(\GG)\to\Ltwo(\GG)$ onto $\Ltwo(\sX)$, as described in Proposition \ref{propTro}, and we have \[\eta(E_{12}x) = Q\eta(x)\] for all $x\in D(\eta)$.  Since $\sX$ is preserved by $\tau$, we can  use  Corollary \ref{corom}   to  conclude that there exists a  contractive idempotent functional $\omega\in\C_0^u(\GG)^*$ such that $Q = (\id\otimes\omega)(\wW)$. Using Equation \eqref{multunit2} we see that   $ Q\eta(x) = \eta(\omega\staru x)$ and we get $E_{12}(x) = \omega\staru x$ for all $x\in\Linf(\GG)$. This shows that  $\sX = \sX_\omega$  and we are  done. 
\end{proof}
\begin{remark}
A similar result linking TRO's with contractive idempotent functionals (in the $\C^*$-algebraic framework) was also obtained in \cite[Theorem 4.1]{NSSS}. In the corresponding reasoning  amenability was assumed and used   to show that $E_{12}$ has the form $E_{12}(x) = \omega\staru x$. We were able to avoid  amenability in our reasoning by proving   Proposition \ref{propTro} first and then using it to get the same formula for $E_{12}$.
\end{remark}
 \subsection*{Acknowledgements} The author wishes to thank Adam Skalski for helpful comments.
The author was partially supported by the NCN (National Center of Science) grant
 2015/17/B/ST1/00085. 

\end{document}